\documentclass[11pt,reqno]{amsart}
\usepackage{amssymb,amsthm,amsfonts,amstext}
\usepackage{amsmath}

\usepackage{zito}
\usepackage{calrsfs}

\usepackage[dvipsnames]{xcolor}
	
\usepackage{dsfont}

\makeatletter \@addtoreset{equation}{section}
\@addtoreset{figure}{section}
 \makeatother

\newtheorem{theorem}{Theorem}[section]
\newtheorem{lemma}[theorem]{Lemma}
\newtheorem{proposition}[theorem]{Proposition}

\theoremstyle{remark}
\newtheorem*{remark}{Remark}

\newcommand{\mc}[1]{{\mathcal #1}}

\newcommand{\bb}[1]{{\mathbb #1}}

\newcommand{\<}{\langle}
\renewcommand{\>}{\rangle}

\newcommand{\cadlag}{{c\`adl\`ag~}}
\newcommand{\eps}{\varepsilon}

\newcommand{\Ln}{\Lambda_n}
\newcommand{\On}{\Omega_n}
\newcommand{\Mp}{\mc M_+}
\newcommand{\mcK}{\mc K\!\!}

\newcommand{\slinf}{\sum_{\ell=1}^\infty}
\newcommand{\siinf}{\sum_{i=1}^\infty}
\newcommand{\sil}{\sum_{i=1}^{\ell-1}}

\title{Scaling limits of Smoluchowski particles}

\begin{document}

\author{Julian Amorim}
\address{\noindent IME-USP, Rua do Mat\~ao 1010, 05508-090, S\~ao Paulo SP, Brazil.}
\email{julian.alexandre@impa.br}

\author{Arturo Arellano}
\address{\noindent Department of Mathematics and Statistics, McGill University, Montr\' eal, Qu\' ebec, Canada.}
\email{arturo.arellanoarias@mail.mcgill.ca}

\author{Milton Jara}
\address{Instituto de Matem\'atica Pura e Aplicada, Estrada Dona Castorina 110, 22460-320  Rio de Janeiro, Brazil.}  \email{mjara@impa.br}

\begin{abstract}
We prove a law of large numbers and a functional central limit theorem for the empirical density of a Marcus-Lushnikov model. The limiting density turns out to be the solution of a Smoluchowski equation, and the fluctuations around this limit are shown to be described by an Ornstein-Uhlenbeck process with drift term given by the linearization of the Smoluchowski operator.
\end{abstract}

\maketitle

\section{Introduction}

The Smoluchowski equation is a system of differential equations that describes the evolution of the proportion of particles of a given mass, for a system of particles subjected to coagulation in pairs. This equation was introduced by Smoluchowski \cite{Smo}, who idealized the following setting. Particles of radius $r$ perform independent Brownian motions with variance proportional to $1/r$. In a low-density regime, particles of radius $r_1$ collide and merge (that is, they \emph{coagulate}) with particles of radius $r_2$ at a rate proportional to 
\[
(r_1+r_2)(1/r_1+1/r_2).
\]
Under the assumption of discrete masses for the particles, this dynamics, the effective equation for the densities $u(\ell)$ of masses $\ell=1,2,\dots$ is given by
\begin{equation}
\label{Smolint}
\tfrac{d}{dt} u_t(\ell) = \sil K(i,\ell-i) u_t(i) u_t(\ell-i) - 2 \siinf K(\ell,i) u_t(\ell) u_t(i),
\end{equation}
where $K(\ell,m) := C_0(\ell^{1/3} + m^{1/3})(\ell^{-1/3} + m^{-1/3})$. Here, the first sum corresponds to coagulation of smaller particles producing a particle of mass $\ell$ and the second sum corresponds to the loss of particles of mass $\ell$ due to coagulation with other particles to form larger particles. The class of \emph{coagulation models}, which are stochastic processes with interactions similar to the one described above, have been an active subject of study up to now. Depending on the interaction between particles, encoded by the \emph{coagulation kernel} $K(\cdot,\cdot)$, a rich phenomenology appears, which leads to applications across a wide range of areas of science. These applications include the study of aerosols in atmospheric science, liquid mixing, drop formation in rain, polymer growth and even the formation of celestial bodies in cosmology.

From a probabilistic point of view, the Smoluchowski equation is an effective equation for the evolution of the density of particles in coagulation models. Among these models, probably the simplest example is the \emph{Marcus-Lushnikov process}. In this process, two particles of masses $\ell$, $m$ merge at rate $K(\ell,m)$, forming a new particle of mass equal to $\ell+m$. In this article, the focus is the derivation of scaling limits for the density of particles in the Marcus-Lushnikov model, in particular of \emph{central limit theorems}. 

A classical work in coagulation theory is the review article \cite{Ald} about the Smoluchowski equation for different types of kernels, both in a discrete setting and in a continuous setting, and in particular for the kernels $K(\ell,m) =1$, $K(\ell,m) =\ell + m$ and $K(\ell,m) = \ell m$. This was one of the first works to emphasize the probabilistic point of view in coagulation models, more precisely through finite-volume mean-field models for the Marcus-Lushnikov process. For further details, see the surveys \cite{Ald}, \cite{Nor}, \cite{Ley}.

An important question in the study of interacting stochastic systems is to understand its limiting behavior. An example of such question is the rigorous derivation of the effective equation \eqref{Smolint} from an interacting particle system modeling the Marcus-Lushnikov process. This derivation can be recast as a weak law of large numbers for the density process, a result known in the literature as a \emph{hydrodynamic limit}. This hydrodynamic limit was derived in \cite{Nor} for a variant of the model we consider here, see also \cite{LanNgu}, \cite{FouGie}, \cite{Rez}, \cite{Arm} for other versions and generalizations of this result. Such laws of large numbers remain an active research object for coagulation models, as illustrated by more recent works such as \cite{AndIyeMag1}, with many questions open. 

Another prominent subject of research for coagulation models is the description of the phenomenon known as \emph{gelation}, which can be informally described by the following question: does there exists a time $t>0$ at which a macroscopic fraction of the total mass becomes concentrated in a single cluster? This topic has been extensively studied for the various models appearing in the literature of stochastic coagulation, as can be seen in classical works as \cite{Jeo} and \cite{EscMisPer}, as well as more recent contributions, such as \cite{AndIyeMag2}. Observe that gelation can be interpreted as loss of mass for solutions of \eqref{Smolint}, but the fine structure of gelation is a question that needs to be addressed looking at microscopic models.

Historically, Smoluchowski original model had a spatial component: particles were represented as Brownian motions in space, and the coagulation rate depended on their physical distance. In this setting, particles coagulate upon contact, leading, after suitable approximations, such as the Boltzmann-Grad limit and the Sto\ss{}zahlansatz, to an effective coagulation kernel depending only on particle sizes. We point out that the rigorous justification of such approximations is among the most important open problems in mathematical physics. For Brownian particles, the Sto\ss{}zahlansatz has been proved in the remarkable paper \cite{HamRez}.

In this work, however, we focus on a mean-field version of the model, on which spatial dependence is neglected and particles interact uniformly with one another.
We consider the case where coagulation rates are uniformy bounded and depend only on particle masses. First we recover the hydrodynamic limit results of \cite{LanNgu}, \cite{Nor}, \cite{FouGie}, \cite{Rez} using the martingale for hydrodynamic limits, as described in \cite{KipLan}. The proof of such result requires the derivation of some \emph{a priori} bounds, which are a recurrent theme in the mathematical theory of Smoluchowski equation.

Once a hydrodynamic limit for the density of particles is derived, a natural question from the probabilistic point of view is to look at \emph{fluctuations} around the hydrodynamic limit. When one looks at atypical fluctuations, the goal is to derive a \emph{lrage deviations principle}. When one looks at typical fluctuations, the goal is to derive a \emph{central limit theorem}. In this article, we describe the behavior of typical fluctuations by proving the convergence of a properly defined \emph{density fluctuation field} to the solution of an inifinite-dimensional, linear stochastic differential equation, which in particular has Gaussian solutions. This question about typical fluctuations has received far less interest from the community, and we are only aware of the results in \cite{DeaFouTan}, \cite{Kol}. In particular, our convergence result gives an answer to Open Problem 9 of \cite{Ald} in the case of bounded kernels $K$. Comparing the statement of Open Problem 9 in \cite{Ald} with our Theorem \ref{t3}, we noticed a misprint in \cite{Ald}, which nevertheless does not alter the Gaussian character of fluctuations.

The strategy of proof of both the hydrodynamic limit and the convergence of fluctuations is the same. We start from a weak formulation of the equations satisfied by the limiting objects. Before entering into the proofs of Theorems \ref{t2}, \ref{t3}, we derive some \emph{a priori} estimates needed for the aforementioned proofs. The \emph{a priori} estimates of Section \ref{s3.1} are needed both in the proof of Theorem \ref{t1} and in the derivation of the \emph{a priori} estimates of Section \ref{s4.1}. Using Dynkin formula, we show that the empirical process and the fluctuation field satisfy approximated versions of the limiting equations. The \emph{a priori} estimates allow us to show that the corresponding error terms vanish in the limit, and that the corresponding sequences of processes are tight with respect to suitable topologies.

\section{The model}

Let $\bb N := \{1,2,\dots\}$ and let $n \in \bb N$ be a scaling parameter. Let $\Ln := \{1,\dots,n\}$ and let $\bb N_0 := \{0,1,\dots\}$. We call the elements of $\Ln$ \emph{sites} and we denote them by $x,y,z,...$. Let $\On := \bb N_0^{\Ln}$ be the \emph{space of configurations}. The elements of $\On$ are denoted by $\eta, \xi, ...$. A configuration in $\Omega_n$ assigns a non-negative value $\eta_x$ to a site $x \in \Ln$, that represents the \emph{mass} of a particle at site $x$. If $\eta_x =0$, we say that site $x$ does not have a particle. 

For each $x \in \Ln$, $\delta_x$ denotes the configuration with a particle of mass $1$ at $x$ and no other particles, that is,
\[
(\delta_x)_y :=
\left\{
\begin{array}{r@{\;;\;}l}
1 & y=x,\\
0 & y \neq x.
\end{array}
\right.
\]
Let $ K: \bb N_0 \times \bb N_0 \to [0,\infty)$ be a symmetric, bounded function such that $K(0,\ell) =0$ for every $\ell \in \bb N_0$. For each $n \in \bb N$, let $L_n$ be the operator given by 
\[
L_n f(\eta) := \frac{1}{n} \sum_{x \in \Ln} \sum_{y \neq x} K(\eta_x, \eta_y) \big( f(\eta+ \eta_x(\delta_y-\delta_x))- f(\eta) \big)
\]
for every $f: \On \to \bb R$ and every $\eta \in \On$.

The operator $L_n$ defined in this way turns out to be the generator of a continuous-time Markov chain that has the following dynamics. At rate $\frac{1}{n} K(\eta_x, \eta_y)$, the particle at site $x$ jumps to site $y$, on which case the the mass of the particle at site $x$ is then added to the mass of the particle at site $y$. Since $K(0,\ell) = K(\ell,0)$ for every $\ell \in \bb N_0$, jumps occur only between sites with particles. 

\begin{remark}
The factor $\frac{1}{n}$ in the definition is chosen in such a way that the chain has a well-defined scaling limit without introducing a time rescaling.
\end{remark}

It will be useful to introduce the \emph{carr\'e du champ} operator associated to the generator $L_n$, which is the quadratic operator $\Gamma_n$ given by
\[
\Gamma_n f(\eta) := \frac{1}{n} \sum_{x \in \Ln} \sum_{y \neq x} K(\eta_x, \eta_y) \big( f(\eta+ \eta_x(\delta_y - \delta_x))- f(\eta) \big)^2
\]
for every $f: \On \to \bb R$ and every $\eta \in \On$.

For each $\ell \in \bb N_0$, let $N_\ell: \Omega_n \to \bb N_0$ be the number of particles with mass $\ell$, that is,
\[
N_\ell(\eta) := \sum_{x \in \Ln} \mathds{1}( \eta_x = \ell).
\]
Observe that
\[
\sum_{\ell =0}^\infty N_\ell =n.
\]
Therefore, the infinite vector
\[
\Big( \frac{N_\ell(\eta)}{n}; \ell \in \bb N\Big)
\]
is a subprobability measure in $\bb N_0$ for every $\eta \in \On$. 

Let $\mathds{1} \in \On$ be the \emph{monodisperse} configuration, that is,
\[
\mathds{1}_x := 1
\]
for every $x \in \Ln$, and let $(\eta(t); t \geq 0)$ be the Markov chain generated by $L_n$ with initial condition $\eta(0) = \mathds{1}$. Let $(\pi_t^n; t \geq 0)$ be the process given by
\[
\pi_t^n(\ell)  := \frac{N_\ell(\eta(t))}{n}
\]
for every $t \geq 0$ and every $\ell \in \bb N$. Observe that the dynamics conserves the total mass of the system. Since $\eta(0) = \mathds{1}$ we see that
\begin{equation}
\label{arica}
\slinf \pi_t^n(\ell) \leq 1 \text{ and } \slinf \ell \pi_t^n(\ell) =1.
\end{equation}
Therefore, $(\pi_t^n; t \geq 0)$ is a stochastic process defined on the set
\[
\Mp := \Big\{ \pi: \bb N \to [0,\infty); \slinf \pi(\ell) \leq 1 \Big\}
\]
of subprobability measures on $\bb N$.
Let us denote by $\|\cdot\|_1$ the $\ell^1$-norm in $\bb N$, that is,
\[
\|u-v\|_1 :=  \slinf |u_\ell - v_\ell|
\]
for every $u, v \in \ell^1(\bb N)$. Observe that $\Mp$ is a closed, convex subset of $\ell^1(\bb N)$. We will also consider the \emph{uniform} norm, defined as
\[
\|u\|_\infty := \sup_{\ell \in \bb N} |u_\ell|.
\]

Let $\mc D([0,\infty); \Mp)$ be the space of \cadlag paths equipped with the $J_1$-Skorohod topology. We will think about $(\pi_t^n; t \geq 0)$ as a random variable with values in $\mc D([0,\infty); \Mp)$. The law of $(\pi_t^n; t \geq 0)$ will be denoted by $\bb P_n$ and the expectation with respect to $\bb P_n$ will be denoted by $\bb E_n$.
Fix a finite time horizon $T$. Our first aim will be to establish a law of large numbers for the sequence of random variables $(\pi_t^n; t \in [0,T])_{n \in \bb N}$ with respect to $\bb P_n$.  
In order to state this result, we need to introduce the \emph{Smoluchowski equation}. 

\subsection{The Smoluchowski equation}

The Smoluchowski equation associated to the kernel $K$ is formally the system of ODEs 
\begin{equation}
\label{Smol}
\tfrac{d}{dt} u_\ell(t)  = \sil K(i,\ell-i) u_i(t) u_{\ell-i}(t) -  2\siinf K(\ell,i) u_i(t) u_\ell(t),
\end{equation}
for $\ell \in \bb N$ and $t \in [0,T]$. In order to define the Cauchy problem for \eqref{Smol} in a more rigorous way, let us define the operator $\mcK: \ell^1(\bb N) \to \ell^1(\bb N)$ as
\[
(\mcK u)_\ell := \sil K(i,\ell-i) u_i u_{\ell-i} -  2 \siinf  K(\ell,i) u_\ell  u_i
\]
for every $u \in \ell^1(\bb N)$ and every $\ell \in \bb N$. Observe that for every $u \in \ell^1(\bb N)$,
\begin{equation}
\label{copiapo}
\begin{split}
\slinf | (\mcK u)_\ell | 
		&\leq \slinf \sil K(i,\ell-i) |u_i||u_{\ell-i}| + 2 \slinf \sil K(\ell,i) |u_i||u_\ell| \\
		&\leq \|K\|_\infty \Big( \slinf \sil |u_i||u_{\ell-i}| + 2  \slinf \sil |u_i||u_\ell| \Big) \\
		&\leq  \|K\|_\infty \Big( \slinf \siinf |u_i||u_\ell| +2 \| u\|_1^2 \Big) \leq 3 \|K\|_\infty \|u\|_1^2,
\end{split}
\end{equation}
and in particular $\mcK$ is well defined. In a similar way, 
\begin{equation}
\label{caldera}
\| \mcK u - \mcK v \|_1 \leq 3 \|K\|_\infty ( \|u\|_1 + \|v\|_1) \|u - v \|_1
\end{equation}
for every $u, v \in \ell^1(\bb N)$. 

Let $T >0$ be fixed. We say that a trajectory $u \in \mc C([0,T]; \Mp)$ is a \emph{weak solution} of \eqref{Smol} if for every $t \geq 0$,
\[
u(t) = u(0) + \int_0^t \!\! \mcK u(s) ds.
\]
We say that $u(0)$ is the \emph{initial condition} of this solution. In the case on which $u(0, \ell) = \delta(\ell=1)$, we say that the initial condition is monodisperse. We have the following uniqueness result:

\begin{proposition}
\label{t1}
For every $T >0$ and every $ \overline{u} \in \Mp$, there exists at most one solution of \eqref{Smol} in $\mc C([0,T]; \Mp)$ with initial condition $\bar{u}$.
\end{proposition}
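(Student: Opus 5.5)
Uniqueness will follow from a Gr\"onwall argument in $\ell^1(\bb N)$, based on the local Lipschitz estimate \eqref{caldera} together with the fact that solutions stay in $\Mp$, so their $\ell^1$-norm is at most $1$. Let $u, v \in \mc C([0,T]; \Mp)$ be two weak solutions of \eqref{Smol} with $u(0) = v(0) = \bar u$. Subtracting the two integral identities gives, for every $t \in [0,T]$,
\[
u(t) - v(t) = \int_0^t \big( \mcK u(s) - \mcK v(s) \big)\, ds,
\]
where the integral is understood coordinatewise.

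The first step is to justify taking the $\ell^1$-norm inside the integral. Since $u$ and $v$ are continuous into $\ell^1(\bb N)$ and $\mcK$ is locally Lipschitz by \eqref{caldera}, the map $s \mapsto \mcK u(s) - \mcK v(s)$ is continuous into $\ell^1(\bb N)$. Hence the coordinatewise integral coincides with the Bochner integral, and
\[
\|u(t) - v(t)\|_1 \leq \int_0^t \|\mcK u(s) - \mcK v(s)\|_1\, ds.
\]
Alternatively, one can avoid Bochner integrals entirely. Bound each coordinate by $|u_\ell(t) - v_\ell(t)| \leq \int_0^t |(\mcK u(s))_\ell - (\mcK v(s))_\ell|\, ds$, sum over $\ell$, and exchange sum and integral by Tonelli.

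The second step applies \eqref{caldera} with $\|u(s)\|_1, \|v(s)\|_1 \leq 1$, which holds because $u(s), v(s) \in \Mp$. This yields
\[
\|u(t) - v(t)\|_1 \leq 6 \|K\|_\infty \int_0^t \|u(s) - v(s)\|_1\, ds .
\]
The function $t \mapsto \|u(t) - v(t)\|_1$ is continuous, hence bounded and measurable on $[0,T]$. Gr\"onwall's lemma with zero initial term then gives $\|u(t) - v(t)\|_1 = 0$ for all $t \in [0,T]$, i.e.\ $u = v$.

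There is no serious obstacle here. The only point needing care is the measurability and integrability used in exchanging sum and integral. This is handled by continuity in $\ell^1$ and the uniform bound \eqref{copiapo}, which gives $\|\mcK u(s)\|_1 \leq 3\|K\|_\infty$ uniformly in $s$. The a priori bound $\|u(s)\|_1 \leq 1$ that comes from membership in $\Mp$ is what makes the Lipschitz constant in \eqref{caldera} uniform in time.
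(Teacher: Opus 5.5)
Your proof is correct and follows the paper's argument in substance: both rest on the Lipschitz bound \eqref{caldera} together with the a priori bound $\|u(s)\|_1,\|v(s)\|_1\le 1$ that comes from the solutions taking values in $\Mp$. The paper applies Picard--Lindel\"of on a ball $B$ of radius $a>1$ containing $\Mp$ in its interior, then extends local uniqueness to global uniqueness; you instead apply Gr\"onwall directly on $[0,T]$ with the uniform constant $6\|K\|_\infty$, which is more self-contained and also justifies identifying the coordinatewise integral with the $\ell^1$-valued one, a point the paper leaves implicit.
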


\begin{proof}
The proof of this result is very simple. From \eqref{caldera}, $\mcK$ is uniformly Lipschitz in every bounded set of $\ell^1(\bb N)$. Let $a >1$ and let
\[
B:= \{ u \in \ell^1(\bb N); \|u\|_1 \leq a \}.
\]
Picard-Lindel\"of theorem guarantees local existence and uniqueness of solutions of \eqref{Smol} for initial conditions in the interior of $B$. Observe that $\Mp$ belongs to the interior of $B$. By hypothesis, the solutions of \eqref{Smol} do not leave $\Mp$, and therefore local uniqueness also implies global uniqueness. 
\end{proof}

\begin{remark}
Observe that we have stated only the uniqueness of solutions of \eqref{Smol}, and not the existence. The existence will be a consequence of our convergence theorem, stated in Theorem \ref{t2}. In the literature, it is possible to find much better existence and uniqueness results. We stated Proposition \ref{t1}  in this way for two reasons. First, it is easy to prove, and second, it is exactly what we need in order to prove the convergence stated in Theorem \ref{t2} below.
\end{remark}

\subsection{Hydrodynamic limit and Gaussian fluctuations} Now we can state what we understand as the \emph{hydrodynamic limit} of the sequence of processes $(\pi_t^n; t \geq 0)_{n \in \bb N}$.

\begin{theorem} 
\label{t2}
For every $T >0$, 
\[
\lim_{n \to \infty} \pi_t^n = u(t)
\]
in probability with respect to $\bb P_n$ and with respect to the $J_1$-Skorohod topology of $\mc D([0,T]; \Mp)$, where $u \in \mc C([0,T]; \Mp)$ is the unique solution of \eqref{Smol} with initial condition $u(0) = \delta_1$.
\end{theorem}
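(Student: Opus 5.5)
The plan is the standard martingale method for hydrodynamic limits: tightness, identification of limit points as weak solutions, and uniqueness via Proposition \ref{t1}.

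\emph{Martingale decomposition.} For each $\ell \in \bb N$ take $f_\ell(\eta) := N_\ell(\eta)/n$. A jump of the mass at $x$ onto $y$ removes one particle of mass $\eta_x$ and one of mass $\eta_y$, and creates one of mass $\eta_x+\eta_y$. Summing over ordered pairs and grouping by masses, one computes
\[
L_n f_\ell(\eta) = (\mcK \pi^n)_\ell + R^n_\ell(\eta).
\]
The error $R^n_\ell$ comes from the diagonal terms $x=y$ and the terms with $i=j$ (where $N_i(N_i-1)$ replaces $N_i^2$). It satisfies $\sum_\ell |R^n_\ell| \le C\|K\|_\infty/n$. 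The factor $2$ in the loss term matches the two orderings $(x,y)$ and $(y,x)$, and the gain term matches the symmetric double counting in $\sum_{i=1}^{\ell-1}$; I will check these constants carefully.

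By Dynkin's formula,
\[
M^{n,\ell}_t = \pi^n_t(\ell) - \pi^n_0(\ell) - \int_0^t L_n f_\ell(\eta(s))\,ds
\]
is a martingale with quadratic variation $\int_0^t \Gamma_n f_\ell\, ds$. Each jump changes $f_\ell$ by at most $2/n$, and the total jump rate is at most $\|K\|_\infty n$. Hence $\Gamma_n f_\ell \le 4\|K\|_\infty/n$, and Doob's inequality gives $\bb E_n[\sup_{t\le T}|M^{n,\ell}_t|^2] \le CT/n \to 0$.

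\emph{Tightness in $\mc D([0,T];\Mp)$ with the $\ell^1$ topology.} This is the main obstacle. Coordinatewise tightness follows from Aldous' criterion, since the drift is bounded by $3\|K\|_\infty$ and the martingales vanish. Upgrading to $\ell^1$ requires uniform control of the tail $\sum_{\ell>L}\pi^n_t(\ell)$.

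Here mass conservation \eqref{arica} does the job: $\sum_{\ell > L}\pi^n_t(\ell) \le \frac{1}{L}\sum_\ell \ell\,\pi^n_t(\ell) = 1/L$, deterministically and uniformly in $t$ and $n$. So the process lives in the set $\{\pi\in\Mp : \sum_\ell \ell\pi(\ell)\le 1\}$, which is compact in $\ell^1$. On this set the $\ell^1$ topology coincides with coordinatewise convergence, and the $\ell^1$ modulus of continuity is controlled by finitely many coordinates plus $2/L$. This should make tightness routine.

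Limit points are continuous, because the jumps of $\pi^n$ have $\ell^1$ size at most $3/n$.

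\emph{Identification and conclusion.} Let $\pi$ be a limit point. The map
\[
\pi \mapsto \sup_{t\le T}\Big|\pi_t(\ell)-\pi_0(\ell)-\int_0^t(\mcK\pi_s)_\ell\, ds\Big|
\]
is continuous in the Skorohod topology at continuous paths, by the Lipschitz bound \eqref{caldera} on bounded sets. The martingale estimate and the bound on $R^n$ then show that the limit is supported on weak solutions of \eqref{Smol} in $\mc C([0,T];\Mp)$ with $\pi_0=\delta_1$. Proposition \ref{t1} gives uniqueness, so the limit is the Dirac mass at $u$. Convergence in distribution to a deterministic limit implies convergence in probability, and existence of $u$ comes out as a byproduct.
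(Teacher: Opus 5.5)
Your proposal is correct and follows essentially the same route as the paper. Both arguments use the Dynkin martingales for $N_\ell/n$ with drift $\mcK\pi^n + \frac{1}{n}\mc R\pi^n$, Aldous' criterion based on the compact sets $\{u : \sum_\ell \ell\, u(\ell)\le M\}$, continuity of limit points from the $3/n$ jump bound, passage to the limit in the weak formulation, and uniqueness from Proposition \ref{t1}. The one minor difference is in (A2): you control $\ell^1$ increments by truncating to finitely many coordinates plus the deterministic tail bound $2/L$, while the paper bounds the $\ell^1$ norm of the martingale increments directly by summing quadratic variations over $\ell\le n$; both work.
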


\begin{remark}
This theorem also holds for more general initial conditions; we have chosen the monodisperse initial condition $\delta_1$ for simplicity.
\end{remark}

Observe that Theorem \ref{t2} can be understood as a law of large numbers for the random trajectories $(\pi_t^n; t \in [0,T])_{n \in \bb N}$. When the limiting function satisfies a finite-dimensional ODE, Theorem \ref{t2} is known in the literature as a \emph{fluid limit}. When $u$ satisfies a PDE, Theorem \ref{t2} is known in the literature as a \emph{hydrodynamic limit}. Since Smoluchowski equation requires functional techniques in order to be solved, we consider it to be closer to a PDE than an ODE, and for this reason we call Theorem \ref{t2} the \emph{hydrodynamic limit} for the Smoluchowski equation. Results similar to Theorem \ref{t2} are sometimes called \emph{mean-field limits}.

Whenever a law of large number is established, a natural question is to look whether the \emph{fluctuations} around the hydrodynamic limit satisfy a convergence theorem. For atypical fluctuations, such result is known as \emph{large deviations principle}, and for typical fluctuations, such results are called \emph{central limit theorems}. Our second result describes the asymptotic behavior of typical fluctuations.  Let us define the fluctuation process $(\xi_t^n; t \geq 0)$ as
\[
\xi_t^n(\ell) := \sqrt{n} ( \pi_t^n(\ell) - u_t(\ell))
\]
for every $t \geq 0$ and every $\ell \in \bb N$. Observe that the trajectories of $(\xi_t^n; t \geq 0)$ belong to the space $\mc D([0,\infty); \ell^1(\bb N))$. In order to state the corresponding convergence theorem, we need to define the limiting diffusion.

\subsection{The limiting SDE}

Let $\mc A: \ell^1(\bb N) \times \ell^1(\bb N) \to \ell^1(\bb N)$ be given by
\[
\mc A(u,v)(\ell) := \sil K(i,\ell-i) \big( u_i v_{\ell-i} + u_{\ell-i} v_i \big) 
		- 2 \slinf K(\ell,i) \big( u_\ell v_i + v_\ell u_i \big)
\]
for every $u,v \in \ell^1(\bb N)$ and every $\ell \in \bb N$. Observe that
\begin{equation}
\label{huasco}
\big\| \mc A(u,v) \big\|_1 \leq 6 \|K\|_\infty \|u\|_1 \|v\|_1,
\end{equation}
and in particular $\mc A$ is well defined. Let $Q: \ell^1(\bb N) \times \ell^\infty(\bb N) \to \bb R$ be given by
\begin{equation}
\label{Q}
Q(u;f) := \sum_{i,j \in \bb N} K(i,j) u_i u_j \big( f_{i+j}-f_i - f_j)^2
\end{equation}
for every $u \in \ell^1(\bb N)$ and every $f \in \ell^\infty(\bb N)$. Observe that
\begin{equation}
\label{Q2}
\big| Q(u;f) \big| \leq 9 \|K\|_\infty \|u\|_1^2 \|f\|_\infty^2
\end{equation}
and in particular $Q$ is well defined. 
For $u \in \ell^1(\bb N)$ and $f \in \ell^\infty(\bb N)$, we will write
\[
\<u,f\> := \slinf u(\ell) f(\ell).
\]
We say that a process $(\xi_t; t \geq 0)$ with trajectories in $\mc C([0,T]; \bb R)$ is a \emph{weak solution} of 
\begin{equation}
\label{SDE}
d \xi_t = \mc A(u_t, \xi_t) dt + Q(u_t)^{1/2} d B_t
\end{equation}
if there is a constant $C = C(T)$ such that $\bb E_n [ \|\xi_t\|_1] \leq C$ for every $t \in [0,T]$ 
and if for every $f \in \mc C^1([0,T]; \ell^\infty(\bb N))$, the process $(\mc M_t(f); t \in [0,T])$ given by
\begin{equation}
\label{weak}
\mc M_t(f) := \<\xi_t, f_t\> - \< \xi_0, f_0 \> - \int_0^t \big(\< \xi_s, \partial_s f_s\> + \< \mc A(u_s,\xi_s), f_s \> \big)ds
\end{equation}
for every $t \in [0,T]$, is a continuous martingale of quadratic variation
\[
\<\mc M(f)\>_t = \int_0^t Q(u_s, f_s) ds.
\]
We have the following result:

\begin{proposition}
\label{p2}
Two solutions of \eqref{SDE} with the same initial condition have the same law.
\end{proposition}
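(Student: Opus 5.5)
The plan is a duality argument: test the martingale problem \eqref{weak} against solutions of the backward linearized equation. This identifies the characteristic functional of every finite-dimensional marginal of $\xi$ in terms of the law of $\xi_0$ and deterministic quantities only.

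\textbf{Step 1: the adjoint equation.} For fixed $\ell$, the map $v \mapsto \<\mc A(u_s,v), f\>$ is linear and continuous on $\ell^1(\bb N)$, with norm at most $6\|K\|_\infty \|u_s\|_1\|f\|_\infty$ by \eqref{huasco}. Hence there is a bounded operator $\mc A_s^*$ on $\ell^\infty(\bb N)$ with $\<\mc A(u_s,v),f\> = \<v, \mc A_s^* f\>$. Explicitly,
\[
(\mc A_s^* f)_i = 2\slinf K(i,j) u_j(s)\big(f_{i+j} - f_i - f_j\big).
\]
The coefficients are built from $u\in\mc C([0,T];\Mp)$ and $K$ is bounded, so $s\mapsto \mc A_s^*$ is continuous in operator norm on $\ell^\infty$, with $\|\mc A_s^*\|\le 6\|K\|_\infty$. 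Given $t\in[0,T]$ and $g\in\ell^\infty(\bb N)$, solve the linear backward ODE in the Banach space $\ell^\infty(\bb N)$,
\[
\partial_s f_s = -\mc A_s^* f_s, \qquad f_t = g, \qquad s \in [0,t].
\]
Since this is a linear equation with bounded, continuous coefficients, Picard iteration gives a unique solution. It lies in $\mc C^1([0,t];\ell^\infty(\bb N))$ and satisfies $\|f_s\|_\infty \le e^{6\|K\|_\infty T}\|g\|_\infty$.

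\textbf{Step 2: Gaussian conditional law.} Plug this $f$ into \eqref{weak}, extended by $f_s=f_t$ on $[t,T]$ if needed (or work with horizon $t$). The drift terms cancel, so $\mc M_r(f) = \<\xi_r,f_r\>-\<\xi_0,f_0\>$ is a continuous martingale on $[0,t]$ with deterministic quadratic variation $\int_0^r Q(u_s,f_s)ds$. The moment bound $\bb E\|\xi_r\|_1\le C$ and \eqref{Q2} guarantee integrability. By L\'evy's characterization, or directly by applying It\^o's formula to $\exp(i\theta \mc M_r - \tfrac{\theta^2}{2}\<\mc M\>_r)$, which is a bounded martingale, we get
\[
\bb E\big[e^{i\<\xi_t,g\>} \,\big|\, \mc F_r\big] = \exp\Big(i\<\xi_r,f_r\> - \tfrac12 \int_r^t Q(u_s,f_s)ds\Big).
\]
At $r=0$ this expresses $\bb E[e^{i\<\xi_t,g\>}]$ through the law of $\xi_0$ and deterministic data.

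\textbf{Step 3: finite-dimensional distributions.} Take $0\le t_1<\dots<t_k$ and $g^1,\dots,g^k$. Use induction backwards, conditioning successively on $\mc F_{t_{k-1}},\dots,\mc F_{t_1}$. At each stage the last factor is replaced by $e^{i\<\xi_{t_{j}},\tilde g\>}$ times a deterministic constant, where $\tilde g = g^{j}+f^{(j+1)}_{t_j}$. This shows that $\bb E\exp(i\sum_j\<\xi_{t_j},g^j\>)$ depends only on the law of $\xi_0$. The functionals $\<\cdot,g\>$ with $g\in\ell^\infty$ determine Borel probability measures on the separable space $\ell^1(\bb N)$; it suffices to use finitely supported $g$. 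Therefore the finite-dimensional laws coincide, and so do the laws of the continuous processes.

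\textbf{Main obstacle.} The delicate point is Step 2. One must check that \eqref{weak} genuinely applies to the time-dependent, merely $\ell^\infty$-valued test function $f$ produced in Step 1. This requires the $\mc C^1$ regularity in norm of $f$, which in turn needs the continuity of $s\mapsto u_s$ in $\ell^1$ to make $\mc A_s^*$ norm-continuous. It also requires that the exponential martingale argument be justified with only the first-moment bound on $\xi$; here I would use that the quadratic variation is deterministic and bounded, so no extra integrability is needed.
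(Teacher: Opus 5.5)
Your proof is correct, and its skeleton is the paper's: you write $\langle \mathcal A(u_s,v),f\rangle=\langle v,\mathcal A^\ast(u_s,f)\rangle$ on $\ell^\infty(\mathbb N)$, solve the backward equation $\partial_s f_s+\mathcal A^\ast(u_s,f_s)=0$, $f_t=g$, use it as a test function in \eqref{weak} so that the drift cancels, and recurse over the times $t_k>t_{k-1}>\dots$.

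The real difference is what you extract from the resulting martingale. The paper only takes expectations, getting $\mathbb E[\langle\xi_t,g\rangle]=\langle\xi_0,f_0\rangle$ and $\mathbb E[\langle\xi_t,g\rangle\,|\,\mathcal F_s]=\langle\xi_s,f_s\rangle$. These are conditional means of linear functionals. They do not determine the law of $\xi_t$, despite the paper's claim, and recursing on means does not determine joint laws either. You instead use that the quadratic variation $\int_0^r Q(u_s,f_s)\,ds$ is deterministic. This gives the conditional characteristic functional $\mathbb E[e^{i\langle\xi_t,g\rangle}\,|\,\mathcal F_r]=\exp\bigl(i\langle\xi_r,f_r\rangle-\tfrac12\int_r^t Q(u_s,f_s)\,ds\bigr)$. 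That is exactly the ingredient needed to close the uniqueness-in-law argument, and it is also what makes the solution with $\xi_0=0$ Gaussian. You also supply details the paper leaves implicit: norm-continuity of $s\mapsto\mathcal A^\ast(u_s,\cdot)$ from $u\in\mathcal C([0,T];\mathcal M_+)$, and the fact that finitely supported $g$ determine Borel laws on $\ell^1(\mathbb N)$.

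Two small slips need fixing. First, the bounded martingale is $\exp\bigl(i\theta\mathcal M_r+\tfrac{\theta^2}{2}\langle\mathcal M\rangle_r\bigr)$, not the version with a minus sign. Your displayed conditional formula is nonetheless correct. Second, extending $f$ by the constant $f_t$ on $[t,T]$ breaks $\mathcal C^1$ regularity at $s=t$, so that $f$ is not an admissible test function. Instead, solve the same linear ODE on all of $[0,T]$ with $f_t=g$; this is possible forward and backward because the coefficients are bounded and continuous.
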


\begin{proof}[Sketch of proof] 
Observe that
\[
\big\< \mc A(u_s, \xi_s), f_s\big> = \big\< \xi_s, \mc A^\ast ( u_s,f_s) \big\>,
\]
where
\[
\mc A^\ast (u,f)(\ell) = 2 \siinf K(\ell,i) \big( f_{\ell+i} -f_\ell -f_i \big) u_i.
\]
Therefore,
\[
\big\| \mc A^\ast (u,f) \big\|_\infty \leq 6 \|K\|_\infty  \|u\|_1 \|f\|_\infty
\]
and in particular $f \mapsto \mc A^\ast(u_t,f)$ is uniformly continuous in $[0,T]$. This means that the \emph{Fokker-Planck equation}
\[
\partial_s f_s+ \mc A^\ast (u_s,f_s) = 0 
\]
with final condition $f_t =g$ has a solution in $\mc C^1([0,t]; \ell^\infty(\bb N))$. Using this solution as a test function in \eqref{weak}, we see that
\[
\bb E [\<\xi_t,g\>] = \<\xi_0,f_0\>,
\]
which characterizes the law of $\xi_t$. For $0 \leq s < t$, we also see that
\[
\bb E [\<\xi_t,g\>|\mc F_s] = \<\xi_s,f_s\>,
\]
from where the law of $(\xi_t, \xi_s)$ is characterized as well. All finite-dimensional laws of $(\xi_t; t \in [0,T])$ can be characterized recursively, which shows uniqueness in law.
\end{proof}

We will prove the following result. 

\begin{theorem}
\label{t3}
For every $T >0$, the sequence $(\xi_t^n; t \in [0,T])$ converges in law with respect to the $J_1$-Skorohod topology of $\mc D([0,T]; \ell^1(\bb N))$ to the solution of the system of SDEs
\begin{equation}
\begin{split}
d \xi_t
		&= \mc A(\xi_t,u_t) dt + Q(u_t)^{1/2} d B_t
\end{split}
\end{equation}
with initial condition $\xi_0=0$.
\end{theorem}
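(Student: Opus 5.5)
We follow the classical martingale strategy: derive an approximate version of \eqref{weak} for $\xi^n$ via Dynkin's formula, prove tightness in $\mc D([0,T];\ell^1(\bb N))$, and identify every limit point as a weak solution of \eqref{SDE}. Uniqueness then comes from Proposition \ref{p2}.

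\textbf{Dynkin decomposition.} For $f\in\mc C^1([0,T];\ell^\infty(\bb N))$, a jump in which masses $i,j$ merge changes $\<\pi^n,f\>$ by $\frac1n(f_{i+j}-f_i-f_j)$. Such jumps occur at total rate
\[
\tfrac1n\big(N_iN_j-\delta_{ij}N_i\big)K(i,j)=n\,\pi^n(i)\pi^n(j)K(i,j)-\delta_{ij}K(i,i)\pi^n(i).
\]
Hence
\[
L_n\<\pi^n,f\>=\<\mcK\pi^n,f\>+O(\|K\|_\infty\|f\|_\infty/n),
\]
\[
\Gamma_n\<\pi^n,f\>=\tfrac1n Q(\pi^n;f)+O(n^{-2}).
\]
Subtracting the equation for $u$ and multiplying by $\sqrt n$ gives
\[
\<\xi^n_t,f_t\>-\int_0^t\big(\<\xi^n_s,\partial_sf_s\>+\sqrt n\<\mcK\pi^n_s-\mcK u_s,f_s\>\big)ds=M^n_t(f)+O(n^{-1/2}),
\]
where $M^n(f)$ is a martingale with $\<M^n(f)\>_t=\int_0^tQ(\pi^n_s;f_s)ds+O(n^{-1})$. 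Since $\mcK$ is quadratic,
\[
\sqrt n(\mcK\pi^n-\mcK u)=\mc A(u,\xi^n)+\tfrac{1}{\sqrt n}\,\mcK\xi^n,
\]
with the obvious symmetric-bilinear reading of $\mcK$. By \eqref{copiapo}, the last term is bounded in $\ell^1$ by $3\|K\|_\infty n^{-1/2}\|\xi^n\|_1^2$.

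\textbf{A priori bound.} The main obstacle is a uniform estimate $\sup_n\sup_{t\le T}\bb E_n[\|\xi^n_t\|_1^2]<\infty$, together with control of the tails $\sum_{\ell>L}|\xi^n_t(\ell)|$. This is needed because the $\ell^1$ norm does not come from testing against a single $f$.

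For a fixed $f$ with $\|f\|_\infty\le1$, I would get the second-moment bound by a Gronwall argument. Take $f_s$ solving the backward equation $\partial_sf_s+\mc A^\ast(u_s,f_s)=0$, as in Proposition \ref{p2}. Then $\<\xi^n_t,g\>$ equals a martingale with bounded quadratic variation plus the error $n^{-1/2}\int\<\mcK\xi^n,f\>$. Using $\|\xi^n\|_1\le 2\sqrt n$, this error is at most $C\int\|\xi^n_s\|_1ds$.

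Pointwise bounds are not enough to make this work: the supremum over $\|g\|_\infty\le 1$ is not attained uniformly. My first attempt would therefore handle the tail through the mass identity \eqref{arica}. Since $\sum_\ell\ell\pi^n(\ell)=1=\sum_\ell \ell u(\ell)$, I would prove a bound on $\bb E_n\big[\sum_\ell \ell\,|\xi^n_t(\ell)|\big]$, or alternatively a tail bound $\sum_{\ell>L}\pi^n_t(\ell)\le 1/L$ combined with a second-moment estimate for each finite block $\ell\le L$. In each finite block the Gronwall argument above closes, because the $\ell^1$ norm there is dominated by finitely many test functions $f=\pm\mathds 1_{\{\ell\}}$ summed. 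Combining the two should yield $\bb E_n\|\xi^n_t\|_1\le C$ and uniform tail smallness. Once this holds, the remainder term is $O(n^{-1/2})$ in $L^1$.

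\textbf{Tightness and identification.} For tightness in $\mc D([0,T];\ell^1)$, I would use Aldous' criterion for each coordinate $\<\xi^n,\mathds 1_{\{\ell\}}\>$. The drift is bounded in $L^1$ and the martingale has quadratic variation increments $O(\delta)$. Together with the uniform tail control, this gives tightness in $\ell^1$. Jumps of $\xi^n$ have size $O(n^{-1/2})$, so every limit point is continuous.

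To identify the limit, pass to the limit in the martingale formulation using Theorem \ref{t2}: $Q(\pi^n;f)\to Q(u;f)$ by \eqref{Q2} and continuity of $Q$ in $\ell^1$. The map $\xi\mapsto\<\mc A(u,\xi),f\>=\<\xi,\mc A^\ast(u,f)\>$ is continuous in $\ell^1$. Uniform integrability of $M^n(f)$ and of $M^n(f)^2-\<M^n(f)\>$ follows from bounded quadratic variation and the $O(n^{-1/2})$ jump sizes. Hence each limit satisfies \eqref{weak} with $\xi_0=0$ (recall $\pi^n_0=\delta_1=u_0$), and $\bb E\|\xi_t\|_1\le C$ holds by Fatou. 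Proposition \ref{p2} then gives uniqueness and hence convergence of the whole sequence. Note that $\mc A$ is symmetric, so $\mc A(\xi,u)=\mc A(u,\xi)$.
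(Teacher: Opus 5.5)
The skeleton matches the paper: Dynkin decomposition, Aldous tightness, identifying limits through the martingale problem with Theorem \ref{t2} for the quadratic variation, and uniqueness from Proposition \ref{p2}. Your decomposition $\sqrt n(\mcK\pi^n-\mcK u)=\mc A(u,\xi^n)+n^{-1/2}\mcK\xi^n$ is in fact more accurate than the identity displayed in the paper. The gap is the step you flag yourself as the main obstacle: a uniform bound on $\bb E_n[\|\xi^n_t\|_1^2]$ together with tail control. This is the real content of the paper's proof, and none of the routes you sketch produces it.

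\begin{itemize}
\item The backward-equation argument for a fixed $f$ is circular, as you note: its error is only bounded by $\int_0^t\|\xi^n_s\|_1ds$.
\item The finite-block Gronwall does not close. The loss part of $\mc A(u,\xi^n)(\ell)$ contains $-2u_\ell\siinf K(\ell,i)\xi^n_s(i)$, so every coordinate $\ell\le L$ is driven by the whole vector $\xi^n_s$, tail included.
\item Mass conservation cannot control that tail. The bound $\sum_{\ell>L}\pi^n_t(\ell)\le 1/L$ only gives $\sum_{\ell>L}|\xi^n_t(\ell)|\le 2\sqrt n/L$, which is not uniform in $n$. At best, conservation gives the signed relation $\slinf\ell\,\xi^n_t(\ell)=0$, which says nothing about $\slinf\ell|\xi^n_t(\ell)|$.
\end{itemize}

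What is missing is the paper's combination of a Gronwall argument run directly on the norm with higher moment bounds on cluster sizes. Note that $\|\mc A(u,v)\|_1\le 6\|K\|_\infty\|v\|_1$, and, using $\|\xi^n\|_1\le 2\sqrt n$, also $n^{-1/2}\|\mcK\xi^n\|_1\le 6\|K\|_\infty\|\xi^n\|_1$. Hence Gronwall on $\|\xi^n_t\|_1$ closes and reduces everything to the vector martingale $\mc M^n_t$ of \eqref{laserena}. For that martingale, Minkowski, Cauchy--Schwarz and \eqref{zapallar} give $\bb E_n[\|\mc M^n_t\|_1^2]\le(\slinf\ell^{-2})\slinf\ell^2\,\bb E_n[\<\mc M^n(\ell)\>_t]\le C\int_0^tM_2(s)ds$. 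The bound $M_2(t)\le 1+2\|K\|_\infty t$ comes from applying $L_n$ to $\frac1n\sum_x\eta_x^2$; this is Lemma \ref{l2}.

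First moments alone do not suffice on this route. They give $\slinf\ell\,\bb E_n[\<\mc M^n(\ell)\>_t]\le Ct$, and since $\mc M^n(\ell)=0$ for $\ell>n$, Cauchy--Schwarz then yields only $\bb E_n\|\mc M^n_t\|_1\lesssim\sqrt{t\log n}$.

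The tail control comes from the same Gronwall argument in the weighted norm $\|v\|_{1,1}=\slinf\ell|v(\ell)|$. There the martingale term needs $M_4$ (Lemma \ref{l3}, resting on Lemma \ref{l1}). Since $\{\|v\|_{1,1}\le M\}$ is compact in $\ell^1(\bb N)$, this gives (A1).

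Finally, coordinatewise Aldous plus fixed-time tail bounds does not by itself give tightness in $\mc D([0,T];\ell^1(\bb N))$. It does not prevent $\ell^1$-mass of $\xi^n$ from escaping to large $\ell$ during short random time windows. You need (A2) for the $\ell^1$-increments. The paper gets this for the martingale from the $\ell^2$-weighted bound with a truncation at level $L$ and $M_3$, and for the drift from Lemma \ref{l2}.
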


\section{Proof of Theorem \ref{t2}}

The proof of Theorem \ref{t2} follows the classical script of proofs of functional limits of stochastic processes. First we proof tightness of the sequence $(\pi_t^n; t \in [0,T])_{n \in \bb N}$. Then we proof the every limit point is concentrated on solutions of \eqref{Smol}. After that, the proof follows at once from Proposition \ref{t1}. Before starting the proof, we need to prove some \emph{a priori} bounds for the moments of $\pi_t^n$.

\subsection{A priori estimates}
\label{s3.1}

For $p \geq 1$ and $t \geq 0$, let 
\[
M_p(t) := \bb E_n \Big[ \slinf \ell^p \pi_t^n(\ell)\Big]
\]
be the $p$-th moment of the measure $\pi_t^n$ and observe that
\[
M_p(t) = \bb E_n \Big[ \frac{1}{n} \sum_{x \in \Lambda_n} \eta_x(t)^p \Big].
\]
We start estimating $M_2(t)$, for which we use the differential Dynkin formula. Observe that
\[
\begin{split}
L_n \Big( \frac{1}{n} \sum_{x \in \Ln} \eta_x^2 \Big)
		&= \frac{1}{n^2} \sum_{x \in \Ln} \sum_{y \neq x} K(\eta_x, \eta_y) \big( (\eta_x+\eta_y)^2 -\eta_x^2 - \eta_y^2 \big)\\
		&= \frac{2}{n^2} \sum_{x \in \Ln} \sum_{y \neq x} K(\eta_x, \eta_y)  \eta_x \eta_y \leq 2 \|K\|_\infty \Big( \frac{1}{n} \sum_{x \in \Ln} \eta_x \Big)^2\\
		&\leq 2 \|K\|_\infty.
\end{split}
\]
Therefore, $M_2'(t) \leq 2 \|K\|_\infty$, from where 
\begin{equation}
\label{quillota}
M_2(t) \leq 1 + 2 \|K\|_\infty t
\end{equation}
for every $t \geq 0$. For $p \geq 2$, we have a similar estimate:

\begin{lemma}
\label{l1} 
For every $p \geq 1$ there exists a finite constant $C(p)$ such that
\[
M_p(t) \leq \big(1 + C(p) \|K\|_\infty t \big)^{p-1}
\]
for every $t \geq 0$.
\end{lemma}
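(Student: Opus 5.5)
We will argue with the differential Dynkin formula, exactly as in the computation leading to \eqref{quillota}, and close with a Gronwall-type comparison for the ODE satisfied by a bound on $M_p$. For $p=1$ there is nothing to prove, since by \eqref{arica} $M_1(t)=1$. For $p\geq 2$, apply $L_n$ to $F_p(\eta):=\frac1n\sum_x\eta_x^p$. A jump from $x$ to $y$ changes $F_p$ by $\frac1n\big((\eta_x+\eta_y)^p-\eta_x^p-\eta_y^p\big)$, so
\[
L_nF_p(\eta)=\frac{1}{n^2}\sum_{x\in\Ln}\sum_{y\neq x}K(\eta_x,\eta_y)\big((\eta_x+\eta_y)^p-\eta_x^p-\eta_y^p\big).
\]
This increment is nonnegative, so we may bound $K$ by $\|K\|_\infty$.

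The key elementary inequality, for $a,b\geq 0$ and $p\geq1$, is
\[
(a+b)^p-a^p-b^p\leq C(p)\big(a^{p-1}b+ab^{p-1}\big).
\]
This follows from homogeneity together with the fact that the ratio is bounded as $b/a\to0$ or $\infty$; for integer $p$ one can also expand the binomial and use $a^kb^{p-k}\leq a^{p-1}b+ab^{p-1}$ for $1\leq k\leq p-1$. Inserting this and summing over $y$, using $\frac1n\sum_y\eta_y=1$, we get
\[
L_nF_p\leq 2C(p)\|K\|_\infty\Big(\frac1n\sum_x\eta_x^{p-1}\Big).
\]
By symmetry of the two terms this yields $M_p'(t)\leq 2C(p)\|K\|_\infty M_{p-1}(t)$.

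There are two ways to conclude. One is induction on integer $p$, which gives a polynomial bound of degree $p-1$ in $t$. The other avoids the induction: by Jensen/Hölder, using that $\ell\,\pi^n_t(\ell)$ is a probability measure,
\[
\frac1n\sum\eta_x^{p-1}\leq\Big(\frac1n\sum\eta_x^{p}\Big)^{(p-2)/(p-1)}.
\]
Hence $M_p'\leq 2C(p)\|K\|_\infty M_p^{(p-2)/(p-1)}$, after also applying Jensen for the expectation, as the map is concave. Solving this comparison ODE with $M_p(0)=1$ gives exactly
\[
M_p(t)^{1/(p-1)}\leq 1+\tfrac{2C(p)}{p-1}\|K\|_\infty t,
\]
which is the claimed form $\big(1+C(p)\|K\|_\infty t\big)^{p-1}$ after renaming the constant. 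I would use this second route, since it matches the exponent in the statement directly and also covers non-integer $p$. The case $1<p<2$ follows by Hölder interpolation between $M_1$ and $M_2$, or directly, since the exponent $(p-2)/(p-1)$ is then nonpositive and $M_p\geq1$.

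The main obstacle is justifying Dynkin's formula for the unbounded function $F_p$. For fixed $n$, however, mass is conserved and $\eta_x\leq n$, so the state space reachable from $\mathds 1$ is finite. The formula is therefore exact and $M_p$ is differentiable. The only remaining care is the comparison argument for the ODE with a non-Lipschitz right-hand side at zero. This is harmless here because $M_p\geq M_1=1>0$, since $\ell^p\geq\ell$.
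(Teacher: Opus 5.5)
Your proof is correct, but it closes the moment hierarchy differently from the paper. Both arguments start from the same Dynkin computation and the same inequality $M_p'\le c(p)\|K\|_\infty M_{p-1}$. The paper obtains $c(p)=2^p-2$ from the binomial expansion and Young's inequality, exactly as in your integer-$p$ remark.

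The paper then inducts on integer $p$, inserting the bound for $M_{p-1}$ into the time integral. This forces it to track constants: it must check that $C(p)\ge C(p-1)$ and use the inequality $1+a((1+t)^{p-1}-1)\le(1+at)^{p-1}$ for $a\ge 1$. Non-integer $p$ is then handled by interpolation.

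You use mass conservation a second time instead. Since $\eta_x/n$ is a probability weight on sites, Jensen gives $M_{p-1}\le M_p^{(p-2)/(p-1)}$. The resulting closed inequality $M_p'\le c\,M_p^{(p-2)/(p-1)}$, together with $M_p\ge 1$, integrates in one line to $M_p^{1/(p-1)}\le 1+\frac{c}{p-1}t$.

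What your route buys: the exponent $p-1$ comes out of the ODE itself rather than from an induction. It covers every real $p\ge 2$ at once, with no constant bookkeeping and no interpolation above $p=2$. Your justification of Dynkin's formula via the finite reachable state space also fills a point the paper passes over. What the paper's route buys: it uses only the previous moment bound as input, not the size-biased Jensen step, so it is the more standard moment-hierarchy argument.

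For $1<p<2$, keep the Hölder interpolation $M_p\le M_1^{2-p}M_2^{p-1}=M_2^{p-1}\le(1+2\|K\|_\infty t)^{p-1}$, which is valid. Drop the alternative you call ``direct'', because it fails. For $p<2$ the exponent $\alpha=(p-2)/(p-1)$ is negative, so $s\mapsto s^{\alpha}$ is convex. Jensen then gives $\frac1n\sum_x\eta_x^{p-1}\ge\big(\frac1n\sum_x\eta_x^{p}\big)^{\alpha}$, which is the wrong direction for your comparison ODE. The facts $M_p\ge 1$ and $M_{p-1}\le 1$ only yield $M_p'\le c$, i.e.\ $M_p\le 1+ct$. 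Linear growth is not dominated by $(1+Ct)^{p-1}$ for all $t\ge 0$ when $p-1<1$, whatever the constant $C$.
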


\begin{proof}
We start proving the estimate for $p \in \bb N$ by induction. For $p=2$, $M_p(t) \leq 1 +2 \|K\|_\infty t$, and the lemma holds with $C(p) =2$. Assume the lemma holds for $p-1$ with constant $C(p-1) = \frac{2^{p-2}-2}{p-2}$. We have that
\[
\begin{split}
L_n \Big( \frac{1}{n} \sum_{x \in \Ln} \eta_x^p \Big) 
		&= \frac{1}{n^2} \sum_{x \in \Ln} \sum_{y \neq x} K(\eta_x, \eta_y) \big( (\eta_x + \eta_y)^p - \eta_x^p -\eta_y^p \big)\\
		&\leq \frac{\|K\|_\infty}{n^2} \sum_{x,y \in \Ln} \sum_{i=1}^{p-1} \binom{p}{i} \eta_x^{p-i} \eta_y^i \\
		&= \|K\|_\infty \Big( \frac{2p}{n} \sum_{x \in \Ln} \eta_x^{p-1} + \frac{1}{n^2} \sum_{x,y \in \Ln} \sum_{i=2}^{p-2} \binom{p}{i} \eta_x^{p-i} \eta_y^i \Big)
\end{split}
\]
By Young's inequality with $q = \frac{p-2}{p-i-1}$ and $q^\ast = \frac{p-2}{p-i}$, we see that
\[
\eta_x^{p-i} \eta_y^i \leq \Big(\frac{p-i-1}{p-2}\Big) \eta_x^{p-1} \eta_y + \Big( \frac{i-1}{p-2} \Big) \eta_x \eta_y^{p-1},
\]
from where
\[
\begin{split}
L_n \Big( \frac{1}{n} \sum_{x \in \Ln} \eta_x^p \Big)  
		&\leq \|K\|_\infty \Big( 2p + \sum_{i=2}^{p-2}  \binom{p}{i} \Big) \Big( \frac{1}{n} \sum_{x \in \Ln} \eta_x^{p-1} \Big)\\
		&\leq (2^p-2) \|K\|_\infty \Big( \frac{1}{n} \sum_{x \in \Ln} \eta_x^{p-1} \Big)
\end{split}
\]
and therefore $M_p'(t) \leq (2^p-2) \|K\|_\infty M_{p-1}(t)$. Observe that
\[
1+ a((1+t)^{p-1}-1) \leq (1+at)^{p-1}
\]
for every $a \geq 1$, every $p \geq 1$ and every $t \geq 0$. Using the inductive hypothesis and this estimate, we see that 
\[
\begin{split}
M_p(t) 
		& \leq 1 + (2^p-2) \|K\|_\infty \int_0^t M_{p-1}(s) ds\\
		& \leq 1 + (2^p-2) \|K\|_\infty \int_0^t \big(1 + C(p-1) \|K\|_\infty s \big)^{p-2} ds\\
		& \leq 1 + \frac{2^p-2}{C(p-1) (p-1)}  \big( (1 + C(p-1) \|K\|_\infty t)^p -1\big)\\
		&\leq (1+ C(p) \|K\|_\infty t)^p
\end{split}
\]
for $C(p) = \frac{2^p-2}{p-1}$, if $C(p) \geq C(p-1)$. We can check that $p \mapsto \frac{2^p-2}{p}$ is increasing for $p \geq 1+(\log 2)^{-1} = 2,\!442...$, which shows that $C(p) \geq C(p-1)$ for $p \geq 4$. Observe that $C(2) = 2$, $C(3) = 3$ and $C(4) = 4,\!666...$, from where we conclude that the inductive step holds for every $p \geq 3$, which proves the lemma for $p \in \bb N$ and $C(p) = \frac{2^p-2}{p-1}$. The lemma follows for $ p \geq 1$ arbitrary by interpolation, with a possibly different constant $C(p)$.
\end{proof}

\subsection{Tightness}
\label{s2.2}
In order to prove tightness of $(\pi_t^n; t \in [0,T])_{n \in \bb N}$, we will use \emph{Aldous criterion}, see \cite[Theorem 16.10]{Bil} and \cite[Proposition C.2]{JarMen}. We need to prove that
\begin{itemize}
\item[(A1)] for every $t \in [0,T]$ and every $\eps >0$, there exists a compact set $B(\eps,t) \subseteq \Mp$ such that 
\[
\limsup_{n \to \infty} \bb P_n( \pi_t^n \notin B(\eps,t) ) \leq \eps,
\]
\item[(A2)] for every $\eps >0$,
\[
\lim_{\delta \downarrow 0} \limsup_{n \to \infty} \sup_{\gamma \leq \delta} \sup_{\tau \in \mc T_T} \bb P_n \big( \|\pi^n_{\tau+\gamma}-\pi^n_\tau \|_1 > \eps \big) =0,
\]
\end{itemize}
where $\mc T_T$ is the set of stopping times bounded by $T$. Recall that we are using the $\ell^1$-topology on $\Mp$. With respect to this topology, $\Mp$ is not precompact, but for each $M >0$, the sets
\[
B_M := \Big\{ u \in \Mp; \slinf \ell u(\ell) \leq M \Big\}
\]
are compact in $\ell^1(\bb N)$. Therefore,
\[
\bb P_n ( \pi_t^n \notin B_M) \leq \frac{1}{M} \bb E_n \Big[ \frac{1}{n} \slinf \ell \pi_t^n(\ell) \Big] \leq \frac{1}{M},
\]
which shows (A1).

In order to prove (A2), we will use Dynkin formula for the functions $\pi_t^n(\ell)$. Since we will make repeated use of this formula, we will state it with some care. For every $\ell \in \bb N$, the process $(\mc M_t^n(\ell); t \geq 0)$ given by
\begin{equation}
\label{laserena}
\mc M_t^n(\ell) := \sqrt n \Big(\pi_t^n(\ell) - \pi_0^n(\ell) - \int_0^t \!\! L_n \pi_s^n(\ell) ds \Big)
\end{equation}
is a martingale. We will see in the next section in a precise way that the factor $\sqrt n$ makes $\mc M_t^n$ of order $\mc O(1)$. The quadratic variation of $(\mc M_t^n; t \geq 0)$ is given by
\begin{equation}
\label{coquimbo}
\< \mc M^n(\ell)\>_t = n\int_0^t \!\! \Gamma_n \pi_s^n(\ell) ds.
\end{equation}
Recall that $\pi_t^n(\ell) = \frac{N_\ell(\eta(t))}{n}$. We have that
\[
\begin{split}
L_n \Big( \frac{N_\ell}{n} \Big) 
		&= \frac{1}{n^2} \Big( \sil K(i,\ell-i) N_i \big( N_{\ell-i} - \delta(2i = \ell) \big)
		 - 2 \siinf K(i,\ell)  N_\ell\big( N_i - \delta(i=\ell) \big) \Big)\\
		 &\leq \|K\|_\infty\Big( \sil \frac{N_i}{n} \frac{N_{\ell-i}}{n} + 2 \siinf \frac{N_\ell}{n} \frac{N_i}{n} \Big),
\end{split}
\]
from where
\begin{equation}
\label{aconcagua}
\begin{split}
\| L_n \pi_t^n \|_1 
		&\leq \|K\|_\infty \Big( \slinf \sil \pi_t^n(i) \pi_t^n(\ell-i) + 2\slinf \siinf \pi_t^n(\ell) \pi_t^n(i) \Big)\\
		&\leq 3 \|K\|_\infty \|\pi_t^n\|_1^2 \leq 3 \|K\|_\infty.
\end{split}
\end{equation}
Observe as well that 
\begin{equation}
\label{quilpue}
\begin{split}
L_n \frac{N_\ell}{n} 
		&= \frac{1}{n^2} \Big( \sil K(i,\ell-i) N_i \big( N_{\ell-i} - \delta(2i = \ell) \big)
		 - 2 \siinf K(i,\ell)  N_\ell\big( N_i - \delta(i=\ell) \big) \Big)\\
		& = \mcK \pi^n(\ell) + \frac{1}{n} \big(2 K(\ell,\ell) \pi^n(\ell) - K(\ell/2,\ell/2) \pi^n(\ell/2) \big).
\end{split}
\end{equation}
Here and below we use the conventions $K(\ell/2,\ell/2)=0$ and $\pi^n(\ell/2) =0$ if $\ell/2 \notin \bb N$. In order to simplify the notation, let $\mc R: \Mp \to \ell^1(\bb N)$ be the operator given by
\[
\mc R u(\ell) := 2 K(\ell,\ell) u(\ell) - K(\ell/2, \ell/2) u(\ell/2)
\]
for every $u \in \Mp$ and every $\ell \in \bb N$. With this notation, we see that
\begin{equation}
\label{portillo}
L_n \pi_t^n(\ell) = \mcK \pi_t^n(\ell) + \frac{1}{n} \mc R \pi_t^n(\ell).
\end{equation}
Observe that for every $u \in \Mp$,
\begin{equation}
\label{sanfelipe}
\mc R u(\ell) \leq \|K\|_\infty \big( 2 u(\ell) + u(\ell/2) \big),
\end{equation}
from where
\begin{equation}
\label{colina}
\| \mc R u\|_1 \leq 3 \|K\|_\infty \|u\|_1
\end{equation}
Therefore,
\begin{equation}
\label{petorca}
\big| L_n \pi_t^n(\ell) - \mcK \pi_t^n(\ell) \big| \leq \frac{3 \|K\|_\infty}{n},
\end{equation}
and also
\begin{equation}
\label{laligua}
\big\|  L_n \pi_t^n - \mcK \pi_t^n \big\|_1 \leq \frac{3 \|K\|_\infty}{n}.
\end{equation}
These bounds will not be needed to show (A2), but they will be needed in order to show the convergence result of Theorem \ref{t2}.
Observe as well that
\begin{equation}
\label{zapallar}
\begin{split}
\Gamma_n \pi^n(\ell) 
		&= \frac{1}{n^3}\Big( \sil K(i,\ell-i) N_i \big( N_{\ell-i} - \delta(2i = \ell) \big) + 2 \siinf K(i,\ell)  N_\ell\big( N_i - \delta(i=\ell) \big) \Big)\\
		&\leq \frac{\|K\|_\infty}{n} \Big( \sil \pi^n(i) \pi^n(\ell-i) + 2 \pi^n(\ell)\Big).
\end{split}		
\end{equation}
Formula \eqref{laserena} can be rewritten as the vectorial identity 
\begin{equation}
\label{olmue}
\pi_{\tau+\gamma}^n - \pi_{\tau}^n = \int_{\tau}^{\tau+\gamma} \!\! L_n \pi_s^n ds + \frac{\mc M_t^n}{\sqrt n},
\end{equation}
from where we see that
\[
\begin{split}
\|\pi_{\tau+\gamma}^n - \pi_{\tau}^n \|_1 
		&\leq \int_{\tau}^{\tau+\gamma} \!\! \| L_n \pi_s^n \|_1 ds + \frac{1}{n} \| \mc M_{\tau+\gamma}^n - \mc M_{\tau}^n \|_1\\
		&\leq 3 \gamma \|K\|_\infty + \frac{1}{n} \| \mc M_{\tau+\gamma}^n - \mc M_{\tau}^n \|_1,\\
\end{split}
\]
where we used \eqref{aconcagua}.
In other words, the integral part of \eqref{olmue} satisfies (A2) almost surely. We are left to estimate $\| \mc M_{\tau+\gamma}^n - \mc M_{\tau}^n \|_1$. We will use an estimate which is not optimal; but it is simple. A more refined argument will be needed in order to derive the fluctuations result. 

Observe that $\mc M_t^n(\ell) =0$ if $\ell > n$ and also observe that
\begin{equation}
\label{llolleo}
\Gamma_n \pi_t^n(\ell) \leq \frac{3 \|K\|_\infty}{n}.
\end{equation}
Therefore,
\[
\begin{split}
\bb E_n \big[ \| \mc M_{\tau+\gamma}^n - \mc M_{\tau}^n \|_1\big] 
		&= \sum_{\ell=1}^n \bb E_n \big[ |\mc M_{\tau+\gamma}^n(\ell) - \mc M_{\tau}^n(\ell) | \big] \leq \sum_{\ell=1}^n \bb E_n \big[ \<\mc M^n(\ell)\>_{\tau}^{\tau+\gamma} \big]^{1/2} \\
		&\leq \sqrt n \Big( \sum_{\ell=1}^n \bb E_n \big[ \<\mc M^n(\ell)\>_{\tau}^{\tau+\gamma} \big] \Big)^{1/2} \leq \sqrt{3 n \gamma \|K\|_\infty}.
\end{split}
\]
Putting these estimates together, we see that
\[
\bb P_n \big(  \| \pi_{\tau+\gamma}^n - \pi_{\tau}^n \|_1 > \eps \big)
		\leq \frac{\bb E_n \big[  \| \pi_{\tau+\gamma}^n - \pi_{\tau}^n \|_1 \big]}{ \eps} \leq \frac{ 3 \gamma \|K\|_\infty + \sqrt{\frac{3 \gamma}{n} \|K\|_\infty }}{\eps},
\]
from where (A2) follows.

\subsection{Characterization of limit points} 

Now that we know that $(\pi_t^n; t \in [0,T])_{n \in \bb N}$ is tight, we proceed to characterize its limit points. Recall that \eqref{laserena} states that for every $\ell \in \bb N$,
\[
\pi_t^n(\ell) = \pi_0^n(\ell) + \int_0^t \!\!L_n \pi_s^n(\ell) ds + \frac{\mc M_t^n(\ell)}{\sqrt n}.
\]
Using \eqref{llolleo}, we have that
\[
\bb E_n \Big[ \sup_{0 \leq t \leq T} |\mc M_t^n(\ell)|^2 \Big] \leq 4 \bb E_n\big[ \mc M_T^n(\ell)^2 \big] \leq 4 \bb E_n \big[ \< \mc M^n(\ell) \>_T \big] \leq 12 \|K\|_\infty T.
\]
Therefore, $(\frac{1}{\sqrt n}\mc M_t^n(\ell); t \in [0,T])_{n \in \bb N}$ converges to 0 in $L^2(\bb P_n)$ with respect to the uniform topology.

Recall now \eqref{petorca}. We see that
\[
\Big| \int_0^t \!\!\big( L_n \pi_s^n(\ell) - \mcK \pi_s^n(\ell) \big) ds \Big| \leq \frac{3\|K\|_\infty t}{n}.
\]
Therefore, 
\[
\lim_{n \to \infty} \Big( \pi_t^n(\ell) - \pi_0^n(\ell) - \int_0^t \!\! \mcK \pi_s^n(\ell)ds \Big) = 0
\]
in $L^2( \bb P_n)$. Observe as well that the jumps of the process $(\pi_t^n; t \geq 0)$ are of size at most $\frac{3}{n}$. Now we are ready to characterize the limit points of $(\pi_t^n; t \in [0,T])_{n \in \bb N}$. Let $(u_t; t \in [0,T])$ be a limit point. First we observe that the size of the largest jump is a continuous function with respect to the $J_1$-topology. Therefore, $(u_t; t \in [0,T])$ has continuous trajectories (the largest jump has size zero), i.e., its law is concentrated in $\mc C([0,T]; \Mp)$. To follow, we observe that the projection operator $u \mapsto u_t(\ell)$ is not continuous in $\mc D([0,T]; \Mp)$, but it is continuous in $\mc C([0,T]; \Mp)$. Therefore, 
\[
\lim_{n \to \infty} \pi_t^n(\ell) = u_t(\ell)
\]
in law with respect to $\bb P_n$ for every $t \in [0,T]$ and every $\ell \in \bb N$. To finish, we observe that integration is a continuous operation in $\mc D([0,T]; \Mp)$. We conclude that
\[
u_t(\ell) = u_0(\ell) + \int_0^t \mcK u_s(\ell) ds 
\]
for every $t \in [0,T]$ and every $\ell \in \bb N$. In principle, this identity only holds almost surely, and therefore it does not hold simultaneously for every $t \in [0,T]$. However, by the continuity of $u$, we can assume that with probability 1, this identity holds for every $t \in [0,T]$. We conclude that $u$ is a solution of \eqref{Smol}, and the uniqueness result of Proposition \ref{t1} ends the proof of Theorem \ref{t2}.

\section{The fluctuations}

In this section we will prove Theorem \ref{t3}. As we did in the proof of Theorem \ref{t2}, we start deriving some \emph{a priori} estimates for the process $(\xi_t^n; t \in [0,T])$. 

\subsection{A priori estimate in $L^2(\bb P_n)$}
\label{s4.1}

The aim of this section is to prove the following moment bound for the $\ell^1$-norm of $\xi_t^n$:
\begin{lemma}
\label{l2} For every $T >0$ there exists a finite constant $C= C(T, \|K\|_\infty)$ such that
\[
\bb E_n \big[ \| \xi_t^n\|_1^2 \big] \leq C
\]
for every $n \in \bb N$ and every $t \in [0,T]$.
\end{lemma}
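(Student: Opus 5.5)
The plan is to write $\xi^n_t$ through the Dynkin decomposition \eqref{laserena}, split it into a drift part and a martingale part, bound each in $\ell^1$, and close with Gronwall. Since $\eta(0)=\mathds{1}$ we have $\pi^n_0=\delta_1=u(0)$ exactly, so $\xi^n_0=0$. Subtracting the weak form of \eqref{Smol} from \eqref{laserena} and using \eqref{portillo}, we get the vectorial identity
\[
\xi^n_t=\sqrt n\int_0^t\big(\mcK\pi^n_s-\mcK u_s\big)ds+\frac{1}{\sqrt n}\int_0^t \mc R\pi^n_s\,ds+\mc M^n_t .
\]
By \eqref{caldera} and the facts $\|\pi^n_s\|_1\le 1$, $\|u_s\|_1\le 1$, the first term is bounded in $\ell^1$ by $6\|K\|_\infty\int_0^t\|\xi^n_s\|_1ds$. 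By \eqref{colina}, the second term is bounded by $3\|K\|_\infty t/\sqrt n$. Hence
\[
\|\xi^n_t\|_1\le 6\|K\|_\infty\int_0^t\|\xi^n_s\|_1ds+\frac{3\|K\|_\infty t}{\sqrt n}+\|\mc M^n_t\|_1 .
\]
Squaring and applying Cauchy--Schwarz in time gives
\[
\bb E_n\|\xi^n_t\|_1^2\le C\int_0^t\bb E_n\|\xi^n_s\|_1^2ds+C+3\,\bb E_n\|\mc M^n_t\|_1^2 ,
\]
with $C$ depending on $T$ and $\|K\|_\infty$. So it suffices to bound $\bb E_n\|\mc M^n_t\|_1^2$ uniformly in $n$ and $t\le T$, after which Gronwall concludes.

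The main obstacle is the martingale term. The crude argument of Section \ref{s2.2} costs a factor $\sqrt n$ from summing over $\ell\le n$, which is now unaffordable. Instead I will use weights to trade summability for moments:
\[
\|\mc M^n_t\|_1^2\le\Big(\slinf \ell^{-2}\Big)\slinf \ell^2\,\mc M^n_t(\ell)^2 .
\]
Then
\[
\bb E_n\|\mc M^n_t\|_1^2\le\tfrac{\pi^2}{6}\slinf\ell^2\,\bb E_n\<\mc M^n(\ell)\>_t=\tfrac{\pi^2}{6}\,\bb E_n\int_0^t n\slinf\ell^2\,\Gamma_n\pi^n_s(\ell)\,ds .
\]

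Next I bound the integrand using \eqref{zapallar}:
\[
n\slinf\ell^2\Gamma_n\pi^n(\ell)\le\|K\|_\infty\Big(\sum_{i,j}(i+j)^2\pi^n(i)\pi^n(j)+2\slinf\ell^2\pi^n(\ell)\Big).
\]
Since $(i+j)^2\le 2i^2+2j^2$ and $\|\pi^n\|_1\le1$ by \eqref{arica}, this is at most $6\|K\|_\infty\slinf\ell^2\pi^n(\ell)$. Its expectation is $6\|K\|_\infty M_2(s)\le 6\|K\|_\infty(1+2\|K\|_\infty s)$ by \eqref{quillota}, or by Lemma \ref{l1}.

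Therefore $\bb E_n\|\mc M^n_t\|_1^2\le C(T,\|K\|_\infty)$ uniformly in $n$, and Gronwall's inequality yields the claimed bound on $\bb E_n\|\xi^n_t\|_1^2$. I would double-check that the weighting trick is legitimate for the infinite vector. It is, because $\mc M^n_t(\ell)=0$ for $\ell>n$, so all the sums are finite.
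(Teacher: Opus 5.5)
Your proposal is correct and follows essentially the same route as the paper: the same Dynkin decomposition of $\xi^n_t$ with the $\frac{1}{\sqrt n}\mc R\pi^n_s$ correction, the Lipschitz bound \eqref{buin} from \eqref{caldera}, Gronwall, and the $\ell^{-2}$-weighted Cauchy--Schwarz that reduces $\bb E_n\|\mc M^n_t\|_1^2$ to the second moment $M_2$. The only differences are cosmetic. You apply Gronwall to $\bb E_n\|\xi^n_t\|_1^2$ after squaring rather than pathwise, and you keep the factor $n$ in $\langle \mc M^n(\ell)\rangle_t = n\int_0^t \Gamma_n\pi^n_s(\ell)\,ds$ and obtain the constant $6\|K\|_\infty M_2(s)$ correctly; the paper's display drops that factor and is sloppier with constants. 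So your argument goes through as written.
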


\begin{proof}
Recall that $\xi_t^n = \sqrt{n}(\pi_t^n - u_t)$ and observe that $\xi_0^n=0$. Therefore, \eqref{olmue} can be rewritten as
\begin{equation}
\label{nos}
\begin{split}
\xi_t^n
		&= \int_0^t \sqrt{n} \big( L_n \pi_s^n - \tfrac{d}{ds} u_s \big)ds +  \mc M_t^n\\
		&= \int_0^t \sqrt{n} \big( L_n \pi_s^n - \mcK u_s \big)ds +  \mc M_t^n.
\end{split}
\end{equation}
Using \eqref{portillo} and \eqref{laligua}, we see that
\[
\|\xi_t^n\|_1 \leq \int_0^t \sqrt n \| \mcK \pi_s^n - \mcK u_s\|_1 ds + \|\mc M_t^n\|_1 + \frac{3 \|K\|_\infty t}{\sqrt n}.
\]
Using \eqref{caldera}, we see that
\begin{equation}
\label{buin}
\sqrt n \|\mcK \pi_s^n - \mcK u_s\|_1 \leq 3 \|K\|_\infty \sqrt{n} \big( \|\pi_s^n\|_1 + \|u_s\|_1 \big) \|\pi_s^n - u_s\|_1 \leq 6 \|K\|_\infty \|\xi_s^n\|_1,
\end{equation}
from where
\[
\|\xi_t^n\|_1 \leq \int_0^t 6 \|K\|_\infty \|\xi_s^n\|_1 ds +  \|\mc M_t^n\|_1 + \frac{3 \|K\|_\infty t}{\sqrt n}.
\]
Using Gronwall inequality, we see that
\[
\| \xi_t^n\|_1 \leq \int_0^t e^{6\|K\|(t-s)} \Big( \|\mc M_s^n\|_1 + \frac{3 \|K\|_\infty s}{\sqrt n}\Big)ds.
\]
Therefore, in order to prove the lemma, we are left to derive an estimate for
\[
\bb E_n \big[ \|\mc M_t^n\|_1^2]
\]
that is uniform in $n \in \bb N$ and $t \in [0,T]$. From \eqref{llolleo}, we see that
\[
\bb E_n \big[ \mc M_t^n(\ell)^2 \big] \leq 3\|K\|_1 t,
\]
which is not good enough for our purposes, since we need to sum over $\ell$. In order to obtain a better  estimate, we proceed as follows. Using Minkowski and Cauchy-Schwarz inequalities, we see that
\[
\begin{split}
\bb E_n\big[  \|\mc M_t^n\|_1^2 \big] 
		&\leq \Big( \slinf \bb E_n \big[ \mc M_t^n(\ell)^2\big]^{1/2} \Big)^2
		\leq  \Big( \slinf \frac{1}{\ell^2} \Big) \Big( \slinf \ell^2 \bb E_n \big[ \mc M_t^n(\ell)^2 \big] \Big)\\
		&\leq C \slinf \ell^2 \bb E_n \Big[ \int_0^t \Gamma_n \pi_s^n(\ell) ds \Big]\\
		&\leq C \| K \|_\infty \bb E_n \Big[ \int_0^t \slinf \ell^2 \Big(  \sil \pi_s^n(i) \pi_s^n(\ell-i) + 2 \pi_s^n(\ell) \Big) ds \Big]\\
		&\leq C \|K\|_\infty \int_0^t \bb E_n \Big[ \slinf \siinf (\ell+i)^2 \pi_s^n(i) \pi_s^n(\ell) + 2 \slinf \ell^2 \pi_s^n(\ell) \Big] ds\\
		&\leq  C \|K\|_\infty \int_0^t \big( 4 M_2(s) + 2 \big) ds. 
\end{split}
\]
Here we have used \eqref{arica} in various steps.
Using the \emph{a priori} estimate of Lemma \ref{l1}, the lemma is proved.
\end{proof}

\subsection{A priori estimate in $L^1(\bb P_n)$} The aim of this section is to prove the following estimate:

\begin{lemma}
\label{l3}
For every $T >0$ there exists a constant $C= C(T, \|K\|_\infty)$ such that
\[
\bb E_n \Big[ \slinf \ell |\xi_t^n(\ell)| \Big] \leq C
\]
for every $n \in \bb N$ and every $t \in [0,T]$.
\end{lemma}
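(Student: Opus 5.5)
We follow the scheme of the proof of Lemma \ref{l2}, now in the weighted norm $\|v\|_{1,w} := \slinf \ell |v(\ell)|$. The starting point is again \eqref{nos},
\[
\xi_t^n = \int_0^t \sqrt n \big( \mcK \pi_s^n - \mcK u_s\big) ds + \int_0^t \frac{1}{\sqrt n} \mc R \pi_s^n ds + \mc M_t^n .
\]
We estimate each term in $\|\cdot\|_{1,w}$ and close with Gronwall, taking expectations before applying Gronwall.

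\emph{The remainder term.} By \eqref{sanfelipe}, $\slinf \ell |\mc R \pi^n(\ell)| \leq \|K\|_\infty \slinf \ell (2\pi^n(\ell) + \pi^n(\ell/2)) \leq 4\|K\|_\infty$, using \eqref{arica}. So this term contributes at most $4\|K\|_\infty t/\sqrt n$.

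\emph{The nonlinear term.} Write $\mcK \pi - \mcK u = \mc A'(\pi,u;\pi-u)$, a bilinear expression in which one factor is $\pi - u$ and the other is $\pi$ or $u$, symmetrized. In the gain term $\sil K(i,\ell-i)(\pi_i\pi_{\ell-i} - u_iu_{\ell-i})$ we use $\ell = i + (\ell-i)$. This gives
\[
\slinf \ell |\text{gain}| \leq \|K\|_\infty\big( \|\pi-u\|_{1,w}(\|\pi\|_1+\|u\|_1) + \|\pi-u\|_1(\|\pi\|_{1,w}+\|u\|_{1,w})\big).
\]
The loss term $2\ell\sum_i K(\ell,i)(\pi_\ell\pi_i - u_\ell u_i)$ is bounded similarly by $2\|K\|_\infty(\|\pi-u\|_{1,w}\|\pi\|_1 + \|u\|_{1,w}\|\pi-u\|_1)$. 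Since $\|\pi^n\|_{1,w} = 1$ and $\|u_s\|_{1,w}\leq 1$ (the latter by Fatou from Theorem \ref{t2}, or by mass conservation), multiplying by $\sqrt n$ gives
\[
\sqrt n\,\|\mcK\pi_s^n - \mcK u_s\|_{1,w} \leq C\|K\|_\infty \big( \|\xi_s^n\|_{1,w} + \|\xi_s^n\|_1 \big).
\]
Lemma \ref{l2} gives $\bb E_n\|\xi_s^n\|_1 \leq C^{1/2}$. Hence, after taking expectations, $\phi(t) := \bb E_n\|\xi_t^n\|_{1,w}$ satisfies
\[
\phi(t) \leq C\int_0^t \phi(s) ds + C + \bb E_n \|\mc M_t^n\|_{1,w}.
\]

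\emph{The martingale term.} This is the main obstacle, since the weight $\ell$ costs one moment. We use Cauchy--Schwarz with weights $\ell^{-2}$:
\[
\bb E_n \slinf \ell|\mc M_t^n(\ell)| \leq \Big(\slinf \ell^{-2}\Big)^{1/2}\Big(\slinf \ell^4\, \bb E_n \< \mc M^n(\ell)\>_t\Big)^{1/2}.
\]
By \eqref{coquimbo} and \eqref{zapallar}, the last sum is bounded by $\|K\|_\infty \int_0^t \bb E_n[\sum_{i,j}(i+j)^4\pi^n_s(i)\pi^n_s(j) + 2\sum_\ell \ell^4\pi^n_s(\ell)]\,ds$. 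Expanding $(i+j)^4$ and using $\sum_i \pi^n(i)\leq 1$, we have $\sum_i i^a\pi^n(i) \leq M$-type quantities. Indeed, $\sum_{i,j} i^a j^b \pi^n(i)\pi^n(j) \leq (\sum_i i^{a}\pi^n(i))(\sum_j j^b \pi^n(j))$, and each mixed term is controlled by $\frac1n\sum_x \eta_x^4$ times at most a bounded factor, via $\sum_i i^a \pi^n(i)\leq \sum_i i^4 \pi^n(i)$ for $a\geq 1$ (as $i\geq 1$). The product $\sum i^3\pi\cdot\sum j\pi$ equals $\sum i^3 \pi$ by \eqref{arica}, and $(\sum i^2\pi)^2 \leq \sum i^3\pi \cdot \sum i \pi$ by Cauchy--Schwarz. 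So everything reduces to $\bb E_n[\sum_\ell \ell^4\pi^n_s(\ell)] = M_4(s)$, which Lemma \ref{l1} bounds uniformly in $n$ and $s\leq T$. (If a weaker weight such as $\ell^{-1-\delta}$ is preferred, $M_{3+\delta}$ suffices, still covered by Lemma \ref{l1}.) Hence $\bb E_n\|\mc M_t^n\|_{1,w}\leq C(T,\|K\|_\infty)$, and Gronwall's inequality applied to $\phi$ gives the lemma.
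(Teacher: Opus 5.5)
Your proposal is correct and follows the paper's proof of Lemma \ref{l3} essentially step by step: the same decomposition from \eqref{nos}, the $4\|K\|_\infty t/\sqrt n$ bound on the $\mc R$ term, the weighted bilinear estimate for $\mcK \pi_s^n - \mcK u_s$, and Cauchy--Schwarz with weights $\ell^{-2}$, which reduces the martingale term to $M_4$ and hence to Lemma \ref{l1}, all closed by Gronwall. The differences are cosmetic. You apply Gronwall to the expectation rather than pathwise. You invoke Lemma \ref{l2} for the $\|\xi_s^n\|_1$ contribution, whereas the paper simply absorbs it via $\|\xi_s^n\|_1 \le \|\xi_s^n\|_{1,1}$. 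You also make explicit the bound $\sum_\ell \ell\, u_s(\ell) \le 1$, which the paper uses tacitly.
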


\begin{proof}
Let us introduce the notation
\[
\|v\|_{1,1} := \slinf \ell |v(\ell)|
\]
for $v \in \ell^1(\bb N)$. From \eqref{nos},
\[
\|\xi_t^n\|_{1,1} \leq  \int_0^t \sqrt n \big\| L_n \pi_s^n - \mcK u_s \big\|_{1,1} ds + \|\mc M_t^n\|_{1,1}.
\]
From \eqref{arica}, $\|\pi_t^n\|_{1,1} =1$. Observe that
\[
\begin{split}
\sqrt n \big\|L_n \pi_s^m - \mcK \pi_s^n \big\|_{1,1}
		&= \frac{1}{\sqrt n} \big\|\mc R \pi_s^n\big\|_{1,1} \leq \frac{1}{\sqrt n} \slinf \ell \big| \mc R\pi_s^n(\ell) \big|\\
		&\leq \frac{\|K\|_\infty}{\sqrt n} \slinf \ell \big( 2 |\pi_t^n(\ell)| + |\pi_t^n(\ell/2)|\big) \leq \frac{4 \|K\|_\infty\|\pi_t^n\|_{1,1}}{\sqrt n}\\
		&\leq \frac{4 \|K\|_\infty}{\sqrt n}.
\end{split}
\]
We also have that
\[
\begin{split}
\sqrt n \| \mcK \pi_t^n - \mcK u_t \|_{1,1} 
		&\leq \|K\|_\infty \Big( \slinf \sil \ell \big( \pi_t^n(i) |\xi_t^n(\ell-i)| + |\xi_t^n(i)| u_t(\ell-i) \big)\\
		&\quad \quad \quad \quad \quad + 2 \slinf \siinf \ell\big( \pi_t^n(\ell) |\xi_t^n(i)| + |\xi_t^n(\ell)| u_t(i) \big) \Big)\\
		&\leq \|K\|_\infty \Big( \slinf \siinf (\ell+i) \big( \pi_t^n(i) |\xi_t^n(\ell-i)| + |\xi_t^n(i)| u_t(\ell-i) \big)\\
		&\quad \quad \quad \quad \quad +2\big( \|\pi_t^n\|_{1,1} \|\xi_t^n\|_1 + \|\xi_t^n\|_{1,1} \|u_t\|_1 \big) \Big)\\
		&\leq 8 \|K\|_\infty \|\xi_t^n\|_{1,1}.
\end{split}
\]
Therefore,
\[
\|\xi_t^n\|_{1,1} \leq \int_0^t 8 \|K\|_\infty \|\xi_s^n\|_{1,1} ds + \frac{4\|K\|_\infty t}{\sqrt n} + \|\mc M_t^n\|_{1,1}.
\]
Using Gronwald inequality, we obtain the estimate
\begin{equation}
\label{curico}
\|\xi_t^n\|_{1,1} \leq \int_0^t e^{8\|K\|_\infty (t-s)} \Big( \|\mc M_s^n \|_{1,1} + \frac{4\|K\|_\infty s}{\sqrt n} \Big) ds.
\end{equation}
Consequently, in order to establish the estimate of the lemma, is it sufficient to estimate   $\bb E_n [ \|\mc M_t^n\|_{1,1}]$. By \eqref{zapallar},
\[
\begin{split}
\bb E_n [\|\mc M_t^n \|_{1,1}]
		&= \slinf \ell \,\bb E_n [ |\mc M_t^n(\ell)| ] \leq \slinf \ell \, \bb E_n [ \<\mc M^n(\ell)\>_t]^{1/2}
		\leq \slinf \ell \, \bb E_n \Big[ \int_0^t \!\! n \Gamma_n \pi_s^n(\ell) ds \Big]^{1/2}\\
		&\leq \slinf \Big( \| K \|_\infty  \int_0^t \bb E_n \Big[ \ell^2\big( \sil \pi_s^n(i) \pi_s^n(\ell-i) + 2\pi_s^n(\ell) \big) \Big] ds \Big)^{1/2} \\
		&\leq \Big(\slinf \frac{1}{\ell^2}\Big)^{1/2} \Big( \|K\|_\infty \int_0^t \bb E_n \Big[ \slinf \siinf (\ell+i)^4 \pi_s^n(\ell) \pi_s^n(i) + 2 \ell^4 \pi_s^n(\ell) \Big] ds \Big)^{1/2}\\
		&\leq C \|K\|_\infty^{1/2} \Big(\int_0^t 18 M_4(s)ds \Big)^{1/2},
\end{split}
\]
which is uniformly bounded in $[0,T]$ by Lemma \ref{l1}. Putting this estimate into \eqref{curico}, the lemma is proved.
\end{proof}

\subsection{Tightness}

As we did in the proof of the hydrodynamic limit, in order to prove tightness of the sequence $(\xi_t^n; t \in [0,T])_{n \in \bb N}$, we will use Aldous criterion. Condition (A1) is a simple consequence of Lemma \ref{l3}. In fact, observe that for every $M >0$, the set
\[
\big\{ u \in \ell^1(\bb N); \|u\|_{1,1} \leq M \big\}
\]
is compact in $\ell^1(\bb N)$. By Lemma \ref{l3},
\[
\bb P_n \big( \|\xi_t^n\|_{1,1} \geq M \big) \leq \frac{\bb E_n[\|\xi_t^n\|_{1,1}]}{M} \leq \frac{C(T)}{M},
\]
which proves (A1). The proof of (A2) is very similar to the proof we presented in Section \ref{s2.2}. By Dynkin formula,
\begin{equation}
\label{Dyn}
\xi_t^n = \int_0^t (\partial_s + L_n) \xi_s^n ds +  \mc M_t^n,
\end{equation}
so it is enough to prove tightness of the martingales $(\mc M_t^n; t \in [0,T])_{n \in \bb N}$ and of the integral processes $(\mc I_t^n; t \in [0,T])_{n \in \bb N}$ given by
\[
\mc I_t^n := \int_0^t (\partial_s + L_n) \xi_s^n ds
\]
for every $t \in [0,T]$ and every $n \in \bb N$. We start with the martingales. In order to proceed, 
let us estimate $\bb E_n[ \|\mc M_{\tau+\gamma}^n -\mc M_{\tau}^n\|_1]$:
\[
\begin{split}
\bb E_n[ \|\mc M_{\tau+\gamma}^n -\mc M_{\tau}^n
		&\|_1]
		= \bb E_n \Big[ \slinf \big| \mc M_{\tau+\gamma}^n(\ell) - \mc M_\tau^n(\ell)\big| \Big] 
		\leq \slinf \bb E_n \big[ \< \mc M^n(\ell)\>_{\tau}^{\tau+\gamma} \big]^{1/2} \\
		&\leq \Big( \slinf \frac{1}{\ell^2} \Big)^{1/2} \Big( \slinf \ell^2 \bb E_n \big[ \< \mc M^n(\ell)\>_{\tau}^{\tau+\gamma} \big]\Big)^{1/2}\\
		&\leq C \|K\|_\infty^{1/2} \bb E_n \Big[ \int_{\tau}^{\tau+\gamma} \slinf \ell^2 \Big( \sil \pi_s^n(i) \pi_s^n(\ell-i) + 2 \pi_s^n(\ell)  \Big) ds \Big]^{1/2}\\
		&\leq C \|K\|_\infty^{1/2} \bb E_n \Big[ \int_{\tau}^{\tau+\gamma} 6\slinf \ell^2 \pi_s^n(\ell) ds \Big] ^{1/2} \\
\end{split}
\]
Using Lemma \ref{l1}, we observe that for every $L \in \bb N$,
\[
\begin{split}
\bb E_n\Big[ \int_{\tau}^{\tau+\gamma} \slinf \ell^2 \pi_s^n(\ell) ds \Big] 
		&\leq C\gamma L^3 + \bb E_n \Big[ \int_0^T \slinf \frac{\ell^3}{L} \pi_s^n(\ell) ds \Big]\\
		&\leq C \gamma L^3 + \frac{1}{L} \int_0^T M_3(t) dt.
\end{split}
\]
Minimizing over $L$, we see that
\[
\bb E_n \Big[ \int_{\tau}^{\tau+\gamma} \slinf \ell^2 \pi_s^n(\ell) ds \Big] \leq C \gamma^{1/4} \Big( \int_0^T M_3(t) dt \Big)^{1/2},
\]
from where $(\mc M_t^n; t \in [0,T])_{n \in \bb N}$ satisfies (A2).
Now we are left to prove (A2) for $(\mc I_t^n; t \in [0,T])_{n \in \bb N}$.
Recall \eqref{portillo}. Using \eqref{colina}, we see that 
\[
\bb E_n \Big[ \Big\| \int_{\tau}^{\tau+\gamma}\frac{\mc R \pi_s^n}{\sqrt n} ds \Big\|_1 \Big]
		\leq \frac{3 \gamma \|K\|_\infty }{\sqrt n}.
\]
We conclude that this term is tight, and moreover that 
\begin{equation}
\label{rengo}
\lim_{n \to \infty} \int_{0}^{t}\frac{\mc R \pi_s^n}{\sqrt n} ds =0
\end{equation}
in probability with respect to the uniform topology. From \eqref{buin}, we see that
\[
\begin{split}
\big\| \sqrt n \big( \mcK \pi_s^n - \mcK u_s \big) \big\|_1 \leq 6 \|K\|_\infty \| \xi_s^n\|_1. 
\end{split}
\]
Therefore, using Lemma \ref{l2},
\[
\begin{split}
\bb E_n \Big[ \Big\| \int_{\tau}^{\tau+\gamma} \sqrt n  \big( \mcK \pi_s^n - \mcK u_s \big) ds \Big\|_1^2 \Big]
		&\leq \bb E_n \Big[  \Big( \int_{\tau}^{\tau+\gamma} 6 \|K\|_\infty \| \xi_s^n\|_1 ds \Big)^2\Big]\\
		&\leq 36 \|K\|_\infty^2 \gamma \int_0^T \bb E_n \big[ \|\xi_t^n\|_1^2\big] ds \leq C(T, \|K\|_\infty) \gamma,
\end{split}
\]
from where (A2) follows for $(\mc I_t^n; t \in [0,T])_{n \in \bb N}$ and in consequence for $(\xi_t^n; t \in [0,T])_{n \in \bb N}$..

\subsection{Convergence}

Now that we know that every subsequence $(\xi_t^n; t \in [0,T])_{n \in \bb N}$ has limit points, we are left to characterize these limits as solutions of \eqref{SDE}. The proof is very similar to the corresponding part of the proof of Theorem \ref{t2}. First we observe that the jumps of $(\xi_t^n; t \in [0,T])$ have size $\frac{1}{\sqrt n}$, from where we conclude that every limit point is continuous. Let $n'$ be a subsequence for which $(\xi_t^{n'}; t \in [0,T])_{n'}$ is convergent. Let $(\xi_t; t \in [0,T])$ the corresponding limit. Let $f \in \mc C^1([0,T]; \ell^1(\bb N))$. By Dinkyn formula, the process $(\mc M_t^n(f); t \in [0,T])$ given by
\begin{equation}
\label{curepto}
\mc M_t^n(f) := \<\xi_t^n,f_t\> - \int_0^t \big( \<\xi_s^n, \partial_s f_s\> + \<(\partial_s +L_n)\xi_s^n, f_s\> \big) ds
\end{equation}
for every $t \in [0,T]$, is a martingale of quadratic variation
\[
\< \mc M^n(f)\>_t = \int_0^t \Gamma_n \<\xi_s^n, f_s\> ds.
\]
Since $(\xi_t; t \in [0,T])$ has continuous trajectories, 
\[
\lim_{n' \to \infty} \< \xi_t^{n'}\!,f_t\> = \<\xi_t, f_t\>.
\]
Observe that
\[
\sqrt{n} \big( \mcK \pi_s^n - \mcK u_s \big) = \mc A(\xi_s^n, u_s).
\]
From \eqref{rengo}, we see that 
\[
\begin{split}
\lim_{n' \to \infty} \int_0^t \<(\partial_s + L_n) \xi_s^{n'}, f_s\>  ds 
		&= \lim_{n' \to \infty} \int_0^t \big\<\sqrt{n' } \big( \mcK \pi_s^{n'} - \mcK u_s \big), f_s \big\> ds\\
		&=\lim_{n' \to \infty} \int_0^t \<\mc A(\xi_s^{n'}\!, u_s), f_s \> ds.
\end{split}
\]
From \eqref{huasco}, we see that $\xi \mapsto \mc A(\xi,u_s)$ is Lipschitz continuous, uniformly in $t \in [0,T]$. Therefore, the \emph{a priori} estimate of Lemma \ref{l2} combined with the weak convergence of $(\xi_t^{n'}; t \in [0,T])_{n \in \bb N}$ shows that
\[
\lim_{n' \to \infty} \int_0^t \big\<\mc A(\xi_s^{n'}\!, u_s), f_s \big\> ds = \int_0^t \big\<\mc A(\xi_s,u_s), f_s \big\> ds.
\]

Once we have shown that the integral term in \eqref{curepto} converges, we are left to prove the convergence of the martingale term. Recall the definition of $Q$ given in \eqref{Q}. We have that
\[
\begin{split}
\Gamma_n \< \xi_t^n, f_t\> 
		&= \frac{1}{n^2} \sum_{x \in \Ln} \sum_{y \neq x} K(\eta_x, \eta_y)  \big( f_t(\eta_x +\eta_y) -f_t(\eta_x) -f_t(\eta_y) \big)^2\\
		&= \sum_{\ell, m \in \bb N} K(\ell,m) \Big( \pi_t^n(\ell) \pi_t^n(m) - \frac{1}{n} \mathds{1} (\ell=m) \pi_t^n(\ell) \Big) \big( f_t(\ell+m) - f(\ell) -f(m) \big)^2\\
		&= Q(\pi_t^n, f_t) + \frac{1}{n} \slinf \pi_t^n(\ell) \big( f_t(2\ell) - 2 f(\ell) \big)^2.
\end{split}
\]
Observe that
\[
\frac{1}{n} \slinf \pi_t^n(\ell) \big( f_t(2\ell) - 2 f(\ell) \big)^2 \leq \frac{ 9 \|f\|_\infty^2}{n}.
\]
By \eqref{Q2}, $Q(\pi_t^n, f_t)$ is continuous as a function of $\pi_t^n$. Therefore, by Theorem \ref{t1},
\[
\lim_{n' \to \infty} \<\mc M^{n'}\!(f)\>_t = \lim_{n' \to \infty} \int_0^t \Gamma_{n'} \<\xi_s^{n'}\!, f_s\> ds = \int_0^t Q(u_s,f_s) ds.
\]
By Theorem VIII.3.11 of \cite{JacShi}, we conclude that
\[
\lim_{n' \to \infty} \mc M_t^{n'}(f) = \mc M_t(f),
\]
where $(\mc M_t(f); t \in [0,T])$ is a continuous martingale of quadratic variation
\[
\<\mc M(f)\>_t = \int_0^t Q(u_s,f_s) ds.
\]

We conclude that every limit point of $(\xi_t^n; t \in [0,T])_{n \in \bb N}$ is a solution of \eqref{SDE} with initial condition $\xi_0=0$. By Proposition \ref{p2}, such solutions are unique, from where Theorem \ref{t2} follows.

\section*{Acknowledgements}

A.~A.~would like to thank the warm hospitality of IMPA. M.~J.~acknowledge CNPq for its support through the Grant Produtividade em Pesquisa 312146/2021-3 and the Grant Universal 408529/2025-3. M.~J.~acknowledges FAPERJ for its support through a CNE Grant.

\bibliographystyle{plain}


%
%
%
%
%
%
%
%
%
%
%
%
%
%
%
%
%
\end{document}